\documentclass[12pt]{amsart}
\usepackage[colorlinks=true,citecolor=black,linkcolor=black,urlcolor=blue]{hyperref}
\usepackage{amsmath}
\usepackage[english,  activeacute]{babel}
\usepackage[utf8]{inputenc}
\usepackage{amssymb}
\usepackage{amsthm}
\usepackage{graphics,graphicx}
\usepackage{enumerate}
\usepackage{ytableau}
\usepackage{array}
\usepackage{bm}
\usepackage{cite}
\usepackage{a4wide}
\usepackage{color, url}
\usepackage{float}
\usepackage{comment}
\usepackage{xcolor}
\usepackage{color, url}
\usepackage{float}
\usepackage{pgf, tikz}
\usetikzlibrary{decorations.pathreplacing}
\usepackage{xstring}

\newcommand{\seqnum}[1]{\href{http://oeis.org/#1}{\underline{#1}}}

\theoremstyle{plain}
\newtheorem{theorem}{Theorem}[section]

\newtheorem{corollary}[theorem]{Corollary}
\newtheorem{lemma}[theorem]{Lemma}

\theoremstyle{definition}
\newtheorem{definition}[theorem]{Definition}

\newcommand{\col}{{\texttt{col}}}
\newcommand{\row}{{\texttt{row}}}
\newcommand{\area}{{\texttt{area}}}
\newcommand{\edgint}{{\texttt{edgint}}}
\newcommand{\adja}{{\texttt{adja}}}
\newcommand{\point}{{\texttt{point}}}

\newcommand{\sper}{{\texttt{sper}}}

\newcommand{\first}{{\texttt{first}}}
\newcommand{\nbp}{{\texttt{nbp}}}
\newcommand{\sump}{{\texttt{sump}}}
\newcommand{\sumv}{{\texttt{sumv}}}

\newcommand{\peak}{{\texttt{peak}}}
\newcommand{\hpeak}{{\texttt{hpeak}}}

\newcommand{\Po}{{\mathcal{P}}}

\usepackage[usestackEOL]{stackengine}[2013-10-15]
\def\y{\hspace{2.45ex}}  %BETWEEN 1 AND 2 DIGIT NUMBERS
\def\z{\hspace{1.9ex}}    %BETWEEN TWO 2-DIGIT NUMBERS
\stackMath

\newcommand{\DrawDyckPathAxes}[2][]{%
\begingroup
\def\DyckWord{#2}%
\StrLen{\DyckWord}[\DyckLen]%

\pgfmathsetmacro{\height}{0}
\pgfmathsetmacro{\maxheight}{0}

\foreach \i in {1,...,\DyckLen} {%
    \StrChar{\DyckWord}{\i}[\Step]%
    \IfStrEq{\Step}{u}{%
        \pgfmathsetmacro{\height}{\height+1}%
    }{%
        \pgfmathsetmacro{\height}{\height-1}%
    }%
    \pgfmathsetmacro{\maxheight}{max(\maxheight,\height)}%
}%

\begin{tikzpicture}[scale=0.8, #1]

    \coordinate (P0) at (0,0);
    \fill (P0) circle (2pt);

    \foreach \i in {1,...,\DyckLen} {%
        \pgfmathtruncatemacro{\prev}{\i-1}%
        \StrChar{\DyckWord}{\i}[\Step]%
        \IfStrEq{\Step}{u}{%
            \path (P\prev) ++(1,1) coordinate (P\i);
        }{%
            \path (P\prev) ++(1,-1) coordinate (P\i);
        }%
        \draw[thick] (P\prev) -- (P\i);
        \fill (P\i) circle (2pt);
    }%
\end{tikzpicture}%
\endgroup
}

\title{Enumerations and  Bijections for Stanley Polyominoes }

\date{\today}
\subjclass[2010]{05A05, 05A15, 05A19, 05A30.}
\keywords{Stanley polyomino, Catalan number, Generating function, Kernel method, Bijections.}

\begin{document}

\author[J-L. Baril]{Jean-Luc Baril}
\address{LIB, Université Bourgogne Europe, B.P. 47 870, 21078, Dijon Cedex, France}
\email{barjl@u-bourgogne.fr}

\author[A. Blecher]{Aubrey Blecher}
\address{The John Knopfmacher Centre for Applicable
Analysis and Number Theory, School of Mathematics, University of the Witwatersrand, Private Bag 3, Wits 2050, Johannesburg, South Africa}
\email{Aubrey.Blecher@wits.ac.za}

\author[J.L. Ram\'irez]{Jos\'e L. Ram\'irez}
\address{Departamento de Matem\'aticas, Universidad Nacional de Colombia, Bogot\'a, COLOMBIA}
\email{jlramirezr@unal.edu.co}

\newcommand{\aubrey}[1]{\mbox{}{\sf\color{magenta}[aubrey: #1]}\marginpar{\color{magenta}\Large$*$}} 

\newcommand{\jluc}[1]{\mbox{}{\sf\color{blue}[jluc: #1]}\marginpar{\color{blue}\Large$*$}} 

\newcommand{\jose}[1]{\mbox{}{\sf\color{red}[jose: #1]}\marginpar{\color{red}\Large$*$}}

\begin{abstract} Stanley polyominoes are a subclass of parallelogram polyominoes in which each row begins strictly to
the right of the beginning of the previous row and ends strictly to the right of the end of
the previous row. In this paper, we derive generating functions for  Stanley polyominoes based on the numbers of columns and rows, area, semiperimeter, and numbers of interior points and edges. We also establish combinatorial connections through bijections with other combinatorial structures such as Dyck paths, skew Ferrer diagrams, and peakless Motzkin paths. As a byproduct, we answer the open question of finding a bijection between parallelogram polyominoes of area $n$ and  coin fountains with $n$ coins in the even-numbered rows and $n-k$ coins in the odd-numbered rows.
\end{abstract}

\maketitle
\section{Introduction}
A {\it polyomino} is a finite connected set of cells of the square lattice, where connectivity is defined by edge adjacency (see \cite{Book1}). A {\it parallelogram polyomino} is a polyomino such that the  intersection with any line perpendicular to the main diagonal is a connected segment; equivalently, it is delimited by two non-intersecting lattice paths consisting of north and east steps and having the same endpoints. Parallelogram polyominoes form a well-studied subclass of polyominoes, with strong connections to lattice paths and Catalan-type structures.  They have been extensively studied from several enumerative points of view including generating functions, bijections, and algebraic methods (see \cite{Bousquet92,Bousquet,Delest87,Delest,feretic,temp}).

In this paper, we study a subclass of parallelogram polyominoes, which we call {\it Stanley polyominoes}, as introduced in Stanley’s book \cite{stan}. These are parallelogram polyominoes in which each row, read from bottom to top, begins strictly to the right of the beginning of the previous row and ends strictly to the right of the end of the previous row. See Figure~\ref{fig1} for an example. As noted in \cite{stan}, the number of Stanley polyominoes with $n+1$ columns is the $n$th Catalan number 
$$C_{n}=\frac{1}{n+1}\binom{2n}{n}.$$ 
This result is naturally explained by a bijection $\phi$ (defined below) from  Stanley polyominoes with $n+1$ columns to Dyck paths of semilength $n$, that is,  lattice paths of $\mathbb{N}^2$ starting at $(0,0)$, ending at $(2n,0)$, and consisting of up-steps $u=(1, 1)$ and down-steps $d=(1, -1)$.

Let $P$ be a Stanley polyomino with $k$ rows and $n+1$ columns. Define the sequence $a(P)=(a_1, a_2, \ldots, a_k)$ as follows: $a_1$ is the number of cells in the first row minus one, and for $2\leq i \leq k$,   $a_i$ is the number of cells in the $i$th row that are not directly above  a cell of the $(i-1)$th row. Similarly, define the sequence $b(P)=(b_1, b_2, \ldots, b_k)$ as follows: $b_k$ is the number of cells in the last row minus one, and for $1\leq i \leq k-1$,   $b_i$ is the number of cells in the $i$th row that are not directly below a cell of the $(i+1)$th row.

By definition of a Stanley polyomino, we have \[
a_1+a_2+\cdots+a_k = n = b_1+b_2+\cdots+b_k.
\] Moreover, for any $i$ with $1\leq i\leq k$, we have $\sum_{j=1}^ia_i\geq\sum_{j=1}^ib_i$. 

We now define the map $\phi$ by
$$\phi(P)=u^{a_1}d^{b_1}u^{a_2}d^{b_2}\cdots u^{a_k}d^{b_k}.$$
We refer to Figure~\ref{fig1} for an illustration of the bijection $\phi$ on a Stanley polyomino. For the polyomino shown in Figure~\ref{fig1}, we have
\[
a(P)=(5,3,2,2,3)
\qquad\text{and}\qquad
b(P)=(3, 1,6,1,4).
\]
Therefore,
\[
\phi(P)=u^5d^3u^3du^2d^6u^2du^3d^4.
\]

\begin{figure}[htb]
       \centering
   \scalebox{0.4}{\begin{tikzpicture}
% Définir la couleur de remplissage
%\definecolor{lightpink}{rgb}{1.0, 0.71, 0.76}
%\definecolor{lightpink}{rgb}{1.0, 0.85, 0.88}
\definecolor{lightpink}{rgb}{0.85, 0.92, 0.97}% Dessiner les cellules du polyomino avec leurs délimitations
% Colonne 1 (1 cellule)
%\fill[lightpink] (0,0) rectangle (1,1);
%\draw[black] (0,0) rectangle (1,1);
\draw[fill=lightpink, draw=black] (0,0) rectangle (1,1);
\draw[fill=lightpink, draw=black] (1,0) rectangle (2,1);
\draw[fill=lightpink, draw=black] (2,0) rectangle (3,1);
\draw[fill=lightpink, draw=black] (3,0) rectangle (4,1);
\draw[fill=lightpink, draw=black] (4,0) rectangle (5,1);
\draw[fill=lightpink, draw=black] (5,0) rectangle (6,1);

\draw[fill=lightpink, draw=black] (3,1) rectangle (4,2);
\draw[fill=lightpink, draw=black] (4,1) rectangle (5,2);
\draw[fill=lightpink, draw=black] (5,1) rectangle (6,2);
\draw[fill=lightpink, draw=black] (6,1) rectangle (7,2);
\draw[fill=lightpink, draw=black] (7,1) rectangle (8,2);
\draw[fill=lightpink, draw=black] (8,1) rectangle (9,2);

\draw[fill=lightpink, draw=black] (4,2) rectangle (5,3);
\draw[fill=lightpink, draw=black] (5,2) rectangle (6,3);
\draw[fill=lightpink, draw=black] (6,2) rectangle (7,3);
\draw[fill=lightpink, draw=black] (7,2) rectangle (8,3);
\draw[fill=lightpink, draw=black] (8,2) rectangle (9,3);
\draw[fill=lightpink, draw=black] (9,2) rectangle (10,3);
\draw[fill=lightpink, draw=black] (10,2) rectangle (11,3);

\draw[fill=lightpink, draw=black] (10,3) rectangle (11,4);
\draw[fill=lightpink, draw=black] (11,3) rectangle (12,4);
\draw[fill=lightpink, draw=black] (12,3) rectangle (13,4);

\draw[fill=lightpink, draw=black] (11,4) rectangle (12,5);
\draw[fill=lightpink, draw=black] (12,4) rectangle (13,5);
\draw[fill=lightpink, draw=black] (13,4) rectangle (14,5);
\draw[fill=lightpink, draw=black] (14,4) rectangle (15,5);
\draw[fill=lightpink, draw=black] (15,4) rectangle (16,5);
\draw[blue,line width=1.3mm] (4,1) --(5,1);
\draw[blue,line width=1.3mm] (5,2) --(6,2);\draw[blue,line width=1.3mm] (6,2) --(7,2);\draw[blue,line width=1.3mm] (7,2) --(8,2);
\draw[green,line width=1.3mm] (3,1) --(4,1);
\draw[green,line width=1.3mm] (5,1) --(6,1);
\draw[green,line width=1.3mm] (4,2) --(5,2);
\draw[green,line width=1.3mm] (8,2) --(9,2);
\draw[green,line width=1.3mm] (10,3) --(11,3);
\draw[green,line width=1.3mm] (11,4) --(12,4);
\draw[green,line width=1.3mm] (12,4) --(13,4);
\fill[red] (4,1) circle (4pt);\fill[red] (5,1) circle (4pt);
\fill[red] (5,2) circle (4pt);\fill[red] (6,2) circle (4pt);
\fill[red] (7,2) circle (4pt);
\fill[red] (8,2) circle (4pt);
\fill[red] (12,4) circle (4pt);
%\node at (6,-0.35){\Large \texttt{011201123011}};
\end{tikzpicture}} \centering
  \scalebox{0.6}{\begin{tikzpicture}
\newcommand{\lon}[2]{%
  \begin{scope}[shift={(#1,#2)}]
    \filldraw[fill=black, draw=black] (0,0) -- (0.5,0.5) -- (1,0) -- (0.5,-0.5) -- cycle;
  \end{scope}
  }
\newcommand{\lob}[2]{%
  \begin{scope}[shift={(#1,#2)}]
    \filldraw[fill=white, draw=black] (0,0) -- (0.5,0.5) -- (1,0) -- (0.5,-0.5) -- cycle;
  \end{scope}
}

 \foreach \y in {0,1,3,5,7,9,11} {
    \lob{\y}{0}
}
 \foreach \y in {2,4,6,8,10,12,13,14} {
    \lob{\y}{0}
}
 \foreach \y in {0.5,1.5,2.5,3.5,4.5,5.5,6.5,7.5,8.5,10.5,11.5,12.5,13.5} {
    \lob{\y}{0.5}
}
\foreach \y in {1,2,3,4,5,6,7,8,12,13} {
    \lob{\y}{1}
}
\foreach \y in {1.5,2.5,4.5,5.5,6.5,7.5} {
    \lob{\y}{1.5}
}
\lob{6}{2}\lob{7}{2}
\draw[dashed] (0,0)--(15,0);
\draw[dashed] (0,0)--(0,2);
\draw[line width=0.7mm,red] (0,0)--(2.5,2.5)--(4,1)--(5.5,2.5)--(6,2)--(7,3)--(10,0)--(11,1)--(11.5,0.5)--(13,2)--(15,0);
\end{tikzpicture}
}
     
    \caption{A Stanley Polyomino and its associated Dyck path by $\phi$.}
    \label{fig1}
\end{figure}

We now  introduce  several classical statistics on polyominoes. We refer to \cite{Book1} for a historical review of polyominoes, and to \cite{ManSha2} for definitions of several statistics  and enumerative methods related to polyominoes.

Let $P$ be a Stanley polyomino. We denote by $\first(P)$ the number of cells in the first row. We denote by $\col(P)$ and $\row(P)$ the number of columns and rows of $P$, respectively. The \emph{semiperimeter} of $P$, denoted by $\sper(P)$, is half of the \emph{perimeter} of $P$, where the perimeter  is the number of cell borders that are not shared by  two cells of $P$. Thus, $\sper(P)=\col(P)+\row(P)$.  We denote by $\area(P)$ the area of $P$, that is,  the number of cells of $P$. An \emph{interior point} of $P$ is a point belonging to exactly four cells of $P$, and we denote by $\point(P)$ the number of interior points of $P$. We denote by $\edgint(P)$ the number of strictly internal edges, that is, horizontal edges joining two interior points. We denote by $\adja(P)$ the number of weakly internal edges, that is,  horizontal edges joining  two adjacent cells. All these parameters have been extensively investigated across various classes of polyominoes in the literature (see, for example, \cite{Motzkin,BleBreKnop3,Blecher,Bou,Callan, Delest87,Delest93,Duchi,feretic,ManSha2}).

For the polyomino shown in Figure~\ref{fig1}, we have  $\col(P)=16$, $\row(P)=5$, $\sper(P) = 16+5=21$, $\area(P) = 27$, $\point(P) = 7$, and  $\edgint(P)=4$.%, and $\adja(P)=11$. 

To describe how these statistics on Stanley polyominoes are transported by $\phi$ to Dyck paths, we recall a few standard  definitions for Dyck paths. A \emph{peak} (resp. \emph{valley}) is an occurrence of the consecutive subword $ud$ (resp. $du$). The \emph{height} of a peak (resp. valley) is the ordinate of the middle point of the occurrence. A $1$-valley is a valley of height at least one. The semilength of a Dyck path is the number of its up-steps. 

Let $P$ be a Stanley polyomino,  and let
$a=(a_1,a_2,\ldots, a_k)$, $b=(b_1,b_2,\ldots, b_k)$ be the sequences defined above. Let $\phi(P)$ be the image of $P$ under $\phi$. By construction,  the number of rows of $P$ is $k$, while the number of columns is $1+a_1+a_2+\cdots+a_k$. These correspond, respectively, to the number of peaks and to the semilength plus one in $\phi(P)$. Since the semiperimeter is the sum of the numbers of columns and rows, it  corresponds to the number of peaks plus the semilength plus one. Now let $c_i$ denote the number of cells in the $i$th row of $P$. Then  $c_1=1+a_1$ and, for $i\geq 2$, \[
c_i=1+\sum_{j=1}^{i-1}(a_j-b_j)+a_i.
\]
Observe that $c_i$ is equal to the height of the $i$th peak of $\phi(P)$ plus one. Since the  area of $P$ is the sum of the row lengths $c_i$, it follows that $\area(P)$ corresponds to the sum of peak heights plus the number of peaks in $\phi(P)$. The correspondences for $\point(P)$, $\adja(P)$, and $\edgint(P)$ are obtained similarly.   Table~1 summarizes the main correspondences between the statistics on Stanley polyominoes and Dyck paths.    

\begin{table}[ht!]
    \centering
    \begin{tabular}{l|l|l}
  Notation  & Statistics on polyominoes & Statistics on Dyck paths  \\
    \hline
    
         \col(P)&Number of columns  & Semilength + 1\\
          \row(P)& Number of rows & Number of peaks\\
          \sper(P)& Semiperimeter & Number of peaks + semilength + 1\\
           \first(P) & Number of cells in the first row& Height of the first peak + 1\\
         \area(P)& Area & Sum of peak heights + number of peaks\\
          \point(P)&Number of interior points & Sum of valley heights \\
          \adja(P)&Number of weakly internal edges & Sum of valley heights + number of valleys \\
          \edgint(P)& Number of strictly internal  edges & Sum of $1$-valley heights - number of $1$-valleys\\
         
    \end{tabular}
    \caption{Correspondence between  statistics on Stanley polyominoes and  Dyck paths.}
    \label{tab:placeholder}
\end{table}

\noindent{\bf Motivation and outline of the paper.} The enumeration of parallelogram polyominoes with respect to geometric and combinatorial parameters has been extensively studied in the literature. In this paper, we continue this line of research for Stanley polyominoes. Our starting point is a bijection $\phi$ with Dyck paths, which  allows us to transport several natural statistics and to derive a multivariate generating function for Stanley polyominoes with respect to the numbers of columns and rows, the area, the semiperimeter, and the numbers of interior points and edges.

In Section~2, we derive a generating function for Stanley polyominoes with respect to the numbers of columns and rows, area, semiperimeter, and the numbers of interior points and edges. 

In Section~2.1, we study the enumeration with respect to the number of columns and recover the fact that Stanley polyominoes are counted by the Catalan numbers. Further results are obtained depending on the values of the other parameters.

In Sections~2.2 and~2.3, we carry out a similar analysis with respect to the semiperimeter and the area, respectively. We also establish combinatorial connections via bijections with other structures, such as Dyck paths, skew Ferrers diagrams, and peakless Motzkin paths. As a byproduct, we answer an open question by constructing a bijection between parallelogram polyominoes of area $n$ and coin fountains with $n$ coins in the even-numbered rows and $n-k$ coins in the odd-numbered rows.

Finally, in Section~3, we exploit the bijection between Stanley polyominoes and Dyck paths to derive several generating functions expressed as continued fractions.

\section{Area, semiperimeter, interior points, and edges}
In this section, we study  several statistics on the class of Stanley polyominoes: the area, the numbers of columns and rows, the number of interior points, the number of strictly internal edges, and the number of cells in the first row. These are denoted by $\area(P)$, $\col(P)$, $\row(P)$, $\point(P)$, $\edgint(P)$, and $\first(P)$, respectively. Note that the semiperimeter $\sper(P)$ is the sum  of the numbers of columns and rows, and that the number of weakly internal edges $\adja(P)$ is equal to $\point(P) + \row(P)-1$.

Let $\Po$ denote the set of all Stanley polyominoes. For $m,n,r,k\geq 1$ and  $s,t\geq 0$, let $\Po_{m,n,r,s,t,k}$ be the set of Stanley polyominoes satisfying  \begin{align*}
&\col(P)=m,& &\row(P)=n,& &\area(P)=r,&\\ &\edgint(P)=s,&&  \point(P)=t,&&\first(P)=k.&
\end{align*}
Then
\[
\Po=\bigcup_{\substack{m,n,r,k\geq 1\\ s,t\geq 0}}\Po_{m,n,r,s,t,k}.
\]
We first consider the generating function
\[F(x,y,z,p,q)=\sum_{P\in \Po}x^{\col(P)}y^{\row(P)}z^{\area(P)}p^{\edgint(P)}q^{\point(P)}.\]

For $h\geq 1$, let $F_h(x,y,z,p, q)$ denote the generating function for Stanley polyominoes whose first row contains exactly $h$ cells. Then
\begin{align*}
F(x,y,z,p,q)&=\sum_{h\geq 1} F_h(x,y,z,p,q).
\end{align*}
We introduce a catalytic variable $u$ that marks the length of the first row, and we define the generating function 
\begin{align*}
F(x,y,z,p,q;u)&=\sum_{h\geq 1} F_h(x,y,z,p,q) u^{h}\\
&=\sum_{\substack{m,n,r,h\geq 1\\ s,t\geq 0}} \sum_{P\in \Po_{m,n,r,s,t,h}}x^{m}y^{n}z^{r}p^sq^tu^h,
\end{align*} 
so that $F(x,y,z,p,q;1)=F(x,y,z,p,q)$. For brevity, we write $F(u)$ for $F(x,y,z,p,q;u)$, $F(1)$ for $F(x,y,z,p,q)$, and $F_h$ for $F_h(x,y,z,p,q)$.

Our next goal is to obtain an expression for the generating function $F(1)=F(x,y,z,p,q)$.

\begin{theorem}\label{pol:areaper}
The generating function for the number of non-empty Stanley polyominoes according to the number of columns, the number of rows, the area, the number of strictly internal edges, and the number of interior points is 
\[ 
F(x,y,z,p,q)=\frac{G(x,y,z,p,q)}{1+H(x,y,z,p,q)},\qquad \mbox{where}\]
\begin{align*}   
G(x,y,z,p,q)&=  \sum_{\ell\geq 0}
    \frac{x^{\ell+2} y^{\ell+2} \left(p q \right)^{\frac{\ell \left(\ell+3 \right)}{2}} \left(p -1\right) z^{\frac{\left(\ell+3 \right) \left(\ell+2 \right)}{2}}-y^{\ell+1} x^{\ell+1} z^{\frac{\left(\ell+2 \right) \left(\ell+1 \right)}{2}} \left(p q \right)^{\frac{\ell \left(\ell+1 \right)}{2}} p}{p^{2 \ell +1} q^{\ell} \left(x z \left(z p q \right)^{\ell}-1\right)}\Delta_\ell,
 \\
H(x,y,z,p,q)&= \sum_{\ell\geq 0}
    \frac{\left(-\left(p q \right)^{\frac{\ell \left(\ell+3 \right)}{2}} z^{\frac{\left(\ell +4\right) \left(\ell+1 \right)}{2}}+z^{\frac{\left(\ell +6\right) \left(\ell+1 \right)}{2}} q \left(p -1\right) \left(p q \right)^{\frac{\ell \left(\ell +5\right)}{2}}\right) x^{\ell+1} y^{\ell+1}}{p^{2 \ell} q^{\ell} \left(x z \left(z p q \right)^{\ell}-1\right) \left(\left(z p q \right)^{\ell} z q p -1\right)}\Delta_\ell,
 \\
\Delta_\ell&=\frac{1}{\prod_{i=0}^{\ell-1}\left(1-x z \left(z p q \right)^{i}\right) \prod_{i=0}^{\ell-1}\left(\left(z p q \right)^{i} z q p -1\right)}\\
&=\frac{(-1)^\ell}{(xz,zpq)_\ell\cdot (zpq,zpq)_\ell},
\end{align*}
and where $(a,b)_k$ is the Pochhammer symbol defined by
\[(a,b)_k=\prod_{j=0}^{k-1}(1-ab^j).\]

\end{theorem}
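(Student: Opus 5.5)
We derive a functional equation for $F(u)$ by removing the first row, iterate it along $u\mapsto zpq\,u$, and then set $u=1$ and solve for $F(1)$.

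\textbf{Decomposition.} Let $P$ have first row of length $h$.

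If $P$ has a single row, its contribution is $x^hyz^hu^h$. Summing over $h\ge1$ gives $\frac{xyzu}{1-xzu}$.

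Otherwise, deleting the first row leaves a Stanley polyomino $P'$ whose first row has length $c\ge 1$. We then write $h=j+o$, where:
\begin{itemize}
\item $j\ge 1$ is the number of cells of the first row not lying under the second row;
\item $o$ is the overlap of the first two rows, with $1\le o\le c-1$. The upper bound holds because the second row must end strictly to the right of the first, and it forces $c\ge 2$.
\end{itemize}
Gluing the first row back changes the statistics as follows:
\begin{itemize}
\item columns increase by $j$;
\item rows increase by $1$;
\item area increases by $j+o$;
\item the new interior points are the $o-1$ interior lattice points of the overlap segment;
\item the new strictly internal edges number $o-2$ if $o\ge2$ and $0$ if $o=1$.
\end{itemize}
This matches the Dyck-path dictionary of Table~1, since the valley created has height $o-1$. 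Hence the weight of the glued part is $x^j y\, z^{j+o}u^{j+o}q^{o-1}p^{o-2}$, with an extra factor $p$ when $o=1$. Summing over $j\ge1$ gives the factor $\frac{xzu}{1-xzu}$.

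\textbf{Sum over the overlap.} For fixed $c$, the sum over $o$ is
$$\sum_{o=1}^{c-1}(zu)^oq^{o-1}p^{o-2}+(p-1)\frac{zu}{p}.$$
The first term is a finite geometric sum in $w=zpqu$. Its value is a combination of $1$ and $w^{c}$, with coefficients rational in $z,p,q,u$ and denominator $1-zpqu$. Summing against the weight of $P'$ turns $1\cdot u^0$ into $F(1)$ and $w^{c}$ into $F(zpq\,u)$. The restriction $c\ge2$ is harmless, because for $c=1$ the expression $\sum_{o=1}^{c-1}$ vanishes; the $(p-1)$ correction also carries $c\ge2$ and produces a term in $F(1)-F_1$, where $F_1$ is treated the same way. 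This yields a linear functional equation of the form
$$F(u)=A(u)+B(u)\,F(1)+C(u)\,F(zpq\,u),$$
where $A,B,C$ are explicit rational functions. In particular $C(u)$ carries the factor $\frac{xyzu}{(1-xzu)(zpqu-1)}$, up to powers of $p,q$.

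\textbf{Iteration and solution.} Since $|zpq|<1$ formally, we iterate the equation $\ell$ times along $u\mapsto (zpq)^\ell u$. This gives
$$F(u)=\sum_{\ell\ge0}\Big(\prod_{i<\ell}C\big((zpq)^iu\big)\Big)\Big(A\big((zpq)^\ell u\big)+B\big((zpq)^\ell u\big)F(1)\Big).$$
The product $\prod_{i<\ell}C((zpq)^iu)$ produces the factors $x^\ell y^\ell$, the powers $(zpq)^{\ell(\ell\pm1)/2}$ and $z^{\binom{\ell+\cdot}{2}}$, and the Pochhammer denominators $(xzu,zpq)_\ell\,(zpqu,zpq)_\ell$. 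Setting $u=1$ makes these denominators exactly $\Delta_\ell$, up to the sign $(-1)^\ell$. We obtain $F(1)=G-H\,F(1)$, and therefore $F=G/(1+H)$.

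\textbf{Main obstacle.} The step I expect to need the most care is bookkeeping the boundary cases: the $o=1$ edge correction, which is the source of the $(p-1)$ terms, and the exclusion of $c=1$. These must be handled so that $A$, $B$, $C$ come out in exactly the shape that reproduces the stated $G$ and $H$. I would check the result by expanding $F$ to low order in $x$ and comparing with a direct enumeration of Stanley polyominoes, for example the Catalan counts at $y=z=p=q=1$.
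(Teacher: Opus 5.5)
Your proposal is correct and follows essentially the same route as the paper: delete the first row, sum the free left cells to get $\frac{xzu}{1-xzu}$, sum the overlap geometrically in $zpqu$ to reach $F(u)=A(u)+B(u)F(1)+C(u)F(zpqu)$, iterate along $u\mapsto zpq\,u$, and solve at $u=1$. Your uniform overlap sum from $o=1$ with a $(p-1)$ correction gives exactly the paper's $A$, $B$ and $C$ (the paper instead splits off $i=1$ and subtracts the $F_1$, $F_2$ contributions), so $G=\sum_\ell A(v^\ell)\prod_{i<\ell}C(v^i)$ and $H=-\sum_\ell B(v^\ell)\prod_{i<\ell}C(v^i)$ come out as stated.
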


\begin{proof}
We decompose Stanley polyominoes according to the number of rows. 

If $P$ consists of a single row  with $h\geq 1$ cells, then
\begin{align*}
&\col(P)=h,& &\row(P)=1,& &\area(P)=h,&\\
&\edgint(P)=0,& &\point(P)=0,& &\first(P)=h.&
\end{align*}
Hence its contribution  to the generating function is $yx^hz^{h}u^h$, and summing over $h\geq 1$ gives 
\[A_1:=y\sum_{h\geq 1}(xzu)^{h}=\frac{yxzu}{1-xzu}.\]
Now consider a Stanley polyomino $P$ with at least two rows. Let $k$ and $h$ be the numbers of cells of the first and second rows, respectively, and let $i$ be the number of cells in the first row that are adjacent to a cell of the second row.  Then,  $1\leq i\leq h-1$ and $k-i\geq 1$ with $h\geq 2$ and $k\geq 2$.

We construct $P$ from a Stanley polyomino $Q$ counted by $F_h(x,y,z,p,q)$ in two steps: (1) we add below $Q$ a  row of  $i$ cells  in such a way that the first and second rows are left-aligned, and (2) we add $k-i\geq 1$ cells to the left of the first row.  See Figure~\ref{fig2} for an illustration.

 \begin{figure}[ht!]
\centering
\begin{tikzpicture}[scale=0.6, line cap=round, line join=round]
  % --- parámetros: 2 <= i <= h+1 ---
 \definecolor{lightpink}{rgb}{0.85, 0.92, 0.97}
 \draw[fill=lightpink, draw=black] (0,0) rectangle (3,1);
 \draw[fill=lightpink, draw=black] (3,0) rectangle (6,1);
 \draw[<->] (3.3,0.25) -- node[above] {$i$} (5.7,0.25);
 \draw[fill=white, draw=black] (3,1) rectangle (9,2);
 \draw[fill=white, dashed,draw=black] (5,2) rectangle (12,3);
  \draw[fill=white, dashed,draw=black] (10,3) rectangle (14,4);
   \draw[<->] (3.3,1.25) -- node[above] {$h$} (8.7,1.25);
    \draw[<->] (0.3,0.25) -- node[above] {$k-i$} (2.7,0.25);
  \def\i{3}  % altura de la primera columna
  \def\h{5}  % altura de la segunda columna
\end{tikzpicture}
\caption{Decomposition according to the cell numbers of the first two rows.}\label{fig2}
\end{figure}
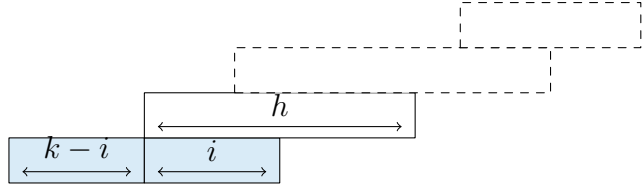

 After applying operation (1), the new bottom row contributes one new row, $i$ new cells, $i-1$ new interior points, and $i-2$ new strictly internal edges when $i\ge 2$. Hence the corresponding weight is  $\alpha_1=yzu$ and $\alpha_i=yz^ip^{i-2}q^{i-1}u^i$ with $i\geq 2$.  Therefore, summing over  $i=1,\dots,h-1$ and then over $h$, we obtain 
\begin{align*}
A_2:=\sum_{h\geq 2} F_h \alpha_1 + \sum_{h\geq 3} F_h  \sum_{i=2}^{h-1} \alpha_i&= \sum_{h\geq 2} F_h yzu+ \sum_{h\geq 3} F_h  \sum_{i=2}^{h-1} yz^ip^{i-2}q^{i-1}u^i\\
&=yzu\left(F(1)-xyz\right) + y\sum_{h\geq 3} F_h  \frac{(zpqu)^h-z^2p^2q^2u^2}{qp^2(zpqu-1)}\\
&= yzu\left(F(1)-xyz\right)+ y\frac{F(zpqu)-F_2(zpqu)-F_1(zpqu)}{qp^2(zpqu-1)}\\ & \hskip4cm-y\frac{z^2p^2q^2u^2(F(1)-F_2(1)-F_1(1))}{qp^2(zpqu-1)},
\end{align*}
where $F_1(u)$ (resp. $F_2(u)$) are the contributions for polyominoes having one (resp. 2) cells in the first row.
Then, we have $F_1(u)=xyzu$ and $F_2(u)=xyz^2(F(u)-F_1(u))$.

Applying operation  (2), that is, adding $k-i\geq 1$ cells to the left of the first row, contributes the factor
$$A_2\cdot \frac{xzu}{1-xzu}.$$
Adding the single-row contribution, we obtain
\begin{align*}
F(u)&=A_1+A_2\cdot \frac{xzu}{1-xzu}\\
&=\frac{x z u \left(x y \,z^{2} \left(p -1\right) u -p \right) y}{p \left(u x z -1\right)} -\frac{u^{2} z^{2} \left(-1+q z \left(p -1\right) u \right) y x}{\left(u x z -1\right) \left(u z q p -1\right)} F(1)
+ \\ &\hskip5cm\frac{y x u z}{\left(-u x z +1\right) \left(u z q p -1\right) q \,p^{2}} F(zpqu).
\end{align*}

Define
\begin{align*}
A(u)&= \frac{x z u \left(x y \,z^{2} \left(p -1\right) u -p \right) y}{p \left(u x z -1\right)}
,\quad
B(u)= \frac{u^{2} z^{2} \left(1-q z \left(p -1\right) u \right) y x}{\left(u x z -1\right) \left(u z q p -1\right)}
,\quad \text{and} \\
C(u)&= \frac{y x u z}{\left(-u x z +1\right) \left(u z q p -1\right) q \,p^{2}}.
\end{align*}
Then $F(u)$ can be written
\begin{equation}\label{equa1}
F(u)=A(u)+B(u)F(1)+C(u)F(uzqp).
\end{equation}
Iterating this relation (with the convention that an empty product equals $1$) and  assuming that $|u|\leq 1$, $|z|\leq 1$, $|q|\leq 1$, and $|p|\leq 1$,  we obtain (after setting $v=zqp$)
\[
F(u)
= \sum_{\ell\geq 0} A(u v^{\ell}) \prod_{i=0}^{\ell-1} C(u v^{i})
+ \left( \sum_{\ell\geq 0} B(u v^{\ell}) \prod_{i=0}^{\ell-1} C(u v^{i}) \right) F(1).
\]
Now set $u=1$ and solve for $F(1)=F(x,y,z,p,q)$:
\[
F(1)
= \frac{\sum_{\ell\geq 0} A(v^{\ell}) \prod_{i=0}^{\ell-1} C(v^{i})}
       {1 - \sum_{\ell\geq 0} B(v^{\ell}) \prod_{i=0}^{\ell-1} C(v^{i})}.
\]
Substituting the explicit expressions for $A(v^\ell)$, $B(v^\ell)$, and $C(v^i)$ and simplifying gives the closed form stated in the theorem.
\end{proof}

The first few terms of the series expansion of $F(x,y,z,p,q)$ in powers of $x$ are
\begin{multline*}
    F(x,y,z,p,q)=xyz + z^2yx^2 + yz^3(yz + 1)x^3 +\bm{(yz^4 + y^3z^6 + qz^6y^2 + 2z^5y^2)x^4} + \\z^5(1 + y^3z^3 + z^2(q^2z^2 + 2qz + 3)y^2 + (pq^2z^3 + 2qz^2 + 3z)y)yx^5 + O(x^6).
\end{multline*}
The bold terms correspond to the Stanley polyominoes shown in Figure~\ref{fig3}. 
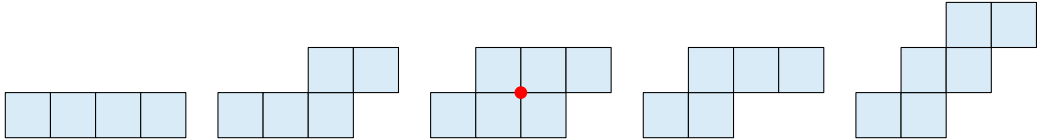
\begin{figure}[H]
\begin{tikzpicture}[scale=0.6, line cap=round, line join=round]
  % --- parámetros: 2 <= i <= h+1 ---
 \definecolor{lightpink}{rgb}{0.85, 0.92, 0.97}
 \draw[fill=lightpink, draw=black] (0,0) rectangle (1,1);
  \draw[fill=lightpink, draw=black] (1,0) rectangle (2,1);
   \draw[fill=lightpink, draw=black] (2,0) rectangle (3,1);
    \draw[fill=lightpink, draw=black] (3,0) rectangle (4,1);
\end{tikzpicture}~~
\begin{tikzpicture}[scale=0.6, line cap=round, line join=round]
  % --- parámetros: 2 <= i <= h+1 ---
 \definecolor{lightpink}{rgb}{0.85, 0.92, 0.97}
 \draw[fill=lightpink, draw=black] (0,0) rectangle (1,1);
  \draw[fill=lightpink, draw=black] (1,0) rectangle (2,1);
   \draw[fill=lightpink, draw=black] (2,0) rectangle (3,1);
    \draw[fill=lightpink, draw=black] (2,1) rectangle (3,2);
     \draw[fill=lightpink, draw=black] (3,1) rectangle (4,2);
\end{tikzpicture}~~
\begin{tikzpicture}[scale=0.6, line cap=round, line join=round]
  % --- parámetros: 2 <= i <= h+1 ---
 \definecolor{lightpink}{rgb}{0.85, 0.92, 0.97}
 \draw[fill=lightpink, draw=black] (0,0) rectangle (1,1);
  \draw[fill=lightpink, draw=black] (1,0) rectangle (2,1);
   \draw[fill=lightpink, draw=black] (2,0) rectangle (3,1);
   \draw[fill=lightpink, draw=black] (1,1) rectangle (2,2);
    \draw[fill=lightpink, draw=black] (2,1) rectangle (3,2);
     \draw[fill=lightpink, draw=black] (3,1) rectangle (4,2);
     \fill[red] (2,1) circle (4pt);
\end{tikzpicture}~~
\begin{tikzpicture}[scale=0.6, line cap=round, line join=round]
  % --- parámetros: 2 <= i <= h+1 ---
 \definecolor{lightpink}{rgb}{0.85, 0.92, 0.97}
 \draw[fill=lightpink, draw=black] (0,0) rectangle (1,1);
  \draw[fill=lightpink, draw=black] (1,0) rectangle (2,1);
   \draw[fill=lightpink, draw=black] (1,1) rectangle (2,2);
    \draw[fill=lightpink, draw=black] (2,1) rectangle (3,2);
     \draw[fill=lightpink, draw=black] (3,1) rectangle (4,2);
\end{tikzpicture}~~
\begin{tikzpicture}[scale=0.6, line cap=round, line join=round]
  % --- parámetros: 2 <= i <= h+1 ---
 \definecolor{lightpink}{rgb}{0.85, 0.92, 0.97}
 \draw[fill=lightpink, draw=black] (0,0) rectangle (1,1);
  \draw[fill=lightpink, draw=black] (1,0) rectangle (2,1);
   \draw[fill=lightpink, draw=black] (1,1) rectangle (2,2);
    \draw[fill=lightpink, draw=black] (2,1) rectangle (3,2);
     \draw[fill=lightpink, draw=black] (2,2) rectangle (3,3);
       \draw[fill=lightpink, draw=black] (3,2) rectangle (4,3);
\end{tikzpicture}
\caption{The five Stanley polyominoes with four columns. From left to right, the contributions are respectively $yz^4x^4$, $y^2z^5x^4$, $qy^2z^6x^4$, $y^2z^5x^4$, and $y^3z^6x^4$.}\label{fig3}
\end{figure}

\subsection{Enumeration with respect to the number of columns.} 
Setting $y=z=p=q=1$ in \eqref{equa1} and writing  $G(u)=F(x,1,1,1,1;u)$, we obtain the following equation
\begin{equation}\label{equa2}
    G \! \left(u \right) \left(1+\frac{x u}{\left(x u -1\right) \left(u -1\right)}\right)
 = 
\frac{x u}{1-x u}+\frac{G \! \left(1\right) u^{2} x}{\left(x u -1\right) \left(u -1\right)}.
\end{equation}
We use the kernel method (see \cite{kernel,pro}) to determine $G(1)$.  This method consists in cancelling the kernel \[K(u)=1+\frac{x u}{\left(x u -1\right) \left(u -1\right)}\] by finding $u$ as an algebraic function $r$ of $x$. Thus, setting $u=r$, the right-hand side must also vanish, which determines $G(1)$. 

We factor the kernel as 
\[K(u)=\frac{x(u-r)(u-s)}{(xu-1)(u-1)},\] where
\[r=\frac{1 - \sqrt{1 - 4x}}{2x} \mbox{\quad and\quad} s=\frac{1 + \sqrt{1 - 4x}}{2x}.\]
Note that $r$ is the generating function of the Catalan numbers, which ensures that  we remain in the ring of formal power series.
Substituting $u=r$ into \eqref{equa2}, we obtain \[G(1)=\frac{r-1}{r}.\] 
Thus $G(1)$ is the generating function of the Catalan numbers, up to a shift. More precisely, $$[x^n]G(1)=\frac{1}{n}{2n-2 \choose n-1}.$$

Now, substituting  the above expression for $G(1)$ into \eqref{equa2} and cancelling the  factor $(u-r)$, we obtain the following result. 

\begin{theorem} The generating function for Stanley polyominoes with respect to the number of columns (marked by $x$) and the number of cells in  the first row (marked by $u$) is
\[G(u)=\frac{u}{r(s-u)}=\frac{\left(2-u-\sqrt{1-4 x}\, u \right) x u}{2 u^{2} x -2 u +2}.\]
Moreover we have for $n\geq 2$ and $1\leq k\leq n$, 
\[[u^k]G(u)=x(rx)^{k-1}~ \mbox{ and }~ [x^nu^k]G(u)=\frac{k-1}{2n-k-1}{2n-k-1\choose n-k}.\]
\end{theorem}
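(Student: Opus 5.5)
The plan is to finish the kernel-method computation started just above, and then read off coefficients by a geometric expansion followed by Lagrange inversion. Everything rests on three facts already established: equation \eqref{equa2}, the factorisation $K(u)=\frac{x(u-r)(u-s)}{(xu-1)(u-1)}$, and the value $G(1)=\frac{r-1}{r}$. I will also use $rs=\frac1x$ and $r=1+xr^2$, both immediate from the explicit roots.

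\textbf{Step 1: closed form for $G(u)$.} Multiply \eqref{equa2} by $(xu-1)(u-1)$. This gives
\[G(u)\,x(u-r)(u-s)=-xu(u-1)+x u^2 G(1).\]
Substituting $G(1)=\frac{r-1}{r}$, the right-hand side becomes
\[xu\Bigl(1-u+\tfrac{(r-1)u}{r}\Bigr)=xu\Bigl(1-\tfrac{u}{r}\Bigr)=\frac{xu(r-u)}{r}.\]
Cancelling the common factor $x(u-r)$ yields $G(u)=\frac{u}{r(s-u)}$. Using $rs=1/x$ this equals $\frac{xu}{1-xru}$. Inserting $r=\frac{1-\sqrt{1-4x}}{2x}$ and clearing denominators then gives the displayed radical expression; this is routine algebra.

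\textbf{Step 2: the coefficient $[u^k]G(u)$.} From $G(u)=\frac{xu}{1-xru}$, expand as a geometric series in $u$, which is legitimate since $xr$ has zero constant term:
\[G(u)=\sum_{k\ge1}x(xr)^{k-1}u^k .\]
Hence $[u^k]G(u)=x(xr)^{k-1}=x^k r^{k-1}$.

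\textbf{Step 3: the coefficient $[x^nu^k]G(u)$.} We need $[x^{n-k}]r^{k-1}$. Since $r=1+xr^2$, the series $T=xr$ satisfies $T=x(1+T)^2$. Lagrange inversion then gives the standard formula
\[[x^m]r^a=\frac{a}{2m+a}\binom{2m+a}{m}, \qquad a\ge1.\]
Taking $m=n-k$ and $a=k-1$ gives
\[[x^nu^k]G(u)=\frac{k-1}{2n-k-1}\binom{2n-k-1}{n-k}.\]

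\textbf{Boundary cases.} For $k=1$ the formula must give $0$ when $n\ge2$. This is consistent, because $r^0=1$ has no positive-degree terms; combinatorially, a first row of one cell forces a single column. For $2\le k\le n$ the denominator $2n-k-1\ge n-1\ge1$, so the formula is well defined, which is why the statement restricts to $n\ge2$.

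The only mildly delicate point is justifying the cancellation of $(u-r)$ and the use of the formal power series $r$. This is the standard kernel-method argument already invoked when deriving $G(1)$, so I do not expect a real obstacle. The remaining work is to check the Lagrange-inversion constant carefully, since an off-by-one in the shift $r=T/x$ would change $a/(2m+a)$.
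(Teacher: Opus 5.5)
Your proposal is correct and follows the paper's own route: substitute $G(1)=(r-1)/r$ into the functional equation, cancel $(u-r)$ to get $G(u)=u/(r(s-u))=xu/(1-xru)$, expand geometrically in $u$, and extract $[x^{n-k}]r^{k-1}$ with the classical formula for powers of the Catalan series (the paper quotes this formula from Wilf in the semiperimeter subsection). One slip in Step 3 needs fixing: $T=xr$ satisfies $T=x+T^2$, not $T=x(1+T)^2$ (applying Lagrange inversion with the latter and $r=T/x$ would give the wrong value $\frac{a}{m+a}\binom{2m+2a}{m}$); the series satisfying $T=x(1+T)^2$ is $T=r-1=xr^2$, and Lagrange inversion with $H(T)=(1+T)^a$ gives $[x^m]r^a=\frac{a}{m}\binom{2m+a-1}{m-1}=\frac{a}{2m+a}\binom{2m+a}{m}$, so your final coefficient formula is right.
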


The first few terms of the series expansion of $G(u)$ as a power series in $x$ 
\begin{multline*}xu + x^2u^2 + u^2(1 + u)x^3 + \bm{u^2(u^2 + 2u + 2)x^4} + u^2(u^3 + 3u^2 + 5u + 5)x^5 +\\ u^2(u^4 + 4u^3 + 9u^2 + 14u + 14)x^6 +u^2(u^5 + 5u^4 + 14u^3 + 28u^2 + 42u + 42)x^7+O(x^8).\end{multline*}
The bold coefficient of $x^4$ corresponds to the Stanley polyominoes shown in   Figure~\ref{fig3}.
\medskip

By differentiating $G(u)$ with respect to $u$ and then setting $u=1$, we obtain the following result.

\begin{corollary} The generating function, with respect to the number of columns, for the total number of cells in the first row over all Stanley polyominoes  is $$\frac{s}{r(s - 1)^2}=\frac{1-\sqrt{1-4x}}{2x}-1$$ and the coefficient of $x^n$ is  the $n$th Catalan number $C_n=\frac{1}{n+1}{2n \choose n}$.  Consequently, the average number of cells in the first row among  Stanley polyominoes tends to $4$  as $n\to\infty$.
    \end{corollary}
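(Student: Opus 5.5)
Differentiate the closed form of the preceding theorem, $G(u)=\frac{u}{r(s-u)}$, with respect to $u$ and set $u=1$. Here $r$ and $s$ are the two roots of $xu^2-u+1=0$, so they satisfy $r+s=1/x$ and $rs=1/x$. The quotient rule gives
\[
\frac{\partial G}{\partial u}=\frac{1}{r}\cdot\frac{(s-u)+u}{(s-u)^2}=\frac{s}{r(s-u)^2},
\]
and evaluating at $u=1$ gives the first expression $\frac{s}{r(s-1)^2}$. Since $G(u)=\sum_{P}x^{\col(P)}u^{\first(P)}$, this derivative is indeed the generating function of the total number of cells in the first row.

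To simplify, use the symmetric-function relations. From $r+s=rs$ we get $s(r-1)=r$, so $s-1=\frac{s}{r}\cdot\frac{1}{r}\cdot r-1$; more directly, $(s-1)(r-1)=rs-r-s+1=1$. Hence $s-1=\frac{1}{r-1}$, and
\[
\frac{s}{r(s-1)^2}=\frac{s(r-1)^2}{r}=(r-1)\cdot\frac{s(r-1)}{r}=r-1.
\]
Since $r=\frac{1-\sqrt{1-4x}}{2x}$, this yields $\frac{1-\sqrt{1-4x}}{2x}-1$. Because $r=\sum_{n\ge0}C_nx^n$ is the Catalan generating function, the coefficient of $x^n$ for $n\ge1$ is $C_n$. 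As an independent check, summing $k\cdot\frac{k-1}{2n-k-1}\binom{2n-k-1}{n-k}$ over $k$ using the explicit coefficients from the theorem should also give $C_n$, but I would rely on the generating-function identity.

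For the average, divide by the number of Stanley polyominoes with $n$ columns, which is $[x^n]G(1)=C_{n-1}$. The average is $C_n/C_{n-1}=\frac{2(2n-1)}{n+1}$, which tends to $4$. The only delicate point is the algebraic simplification, and the relation $(r-1)(s-1)=1$ handles it directly.
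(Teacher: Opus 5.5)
Your proposal is correct and follows the paper's route exactly: differentiate $G(u)=\frac{u}{r(s-u)}$ with respect to $u$ and set $u=1$. Your use of $rs=r+s=1/x$, $(r-1)(s-1)=1$, and $C_n/C_{n-1}=\frac{2(2n-1)}{n+1}$ supplies the simplification and the limit that the paper leaves implicit; the only blemish is the abandoned aside $s-1=\frac{s}{r}\cdot\frac{1}{r}\cdot r-1$, which is false as written (it reduces to $s/r-1$) and should simply be deleted.
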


    Note that, by  the bijection $\phi$ described above, the coefficient of $x^nu^k$ in $G(u)$ counts Dyck paths of semilength $n-1$ whose first  peak has height $k-1$.

\medskip

\begin{corollary} \label{cor24}The generating function, with respect to the number of columns, for Stanley polyominoes $P$ satisfying $\edgint(P)=0$ is $$\frac{x \left(1-2 x \right)}{x^{2}-3 x +1}.$$ Moreover, for $n\ge 2$, the coefficient of $x^n$ in this series is  $F_{2n-3}$, where $F_n$ is the Fibonacci sequence defined by $F_n=F_{n-1}+F_{n-2}$ with the initial conditions $F_0=0$ and $F_1=1$.  
    \end{corollary}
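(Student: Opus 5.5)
The plan is to specialize the functional equation~\eqref{equa1} rather than the closed form of Theorem~\ref{pol:areaper}. Setting $p=0$ in the closed form is awkward because of the negative powers of $p$, whereas the recursive construction in the proof of Theorem~\ref{pol:areaper} makes the case $\edgint(P)=0$ transparent.

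\textbf{Step 1: specialize the weights.} Set $y=z=q=1$ and $p=0$. The weights $\alpha_i=yz^ip^{i-2}q^{i-1}u^i$ then vanish for $i\ge 3$ and equal $u^2$ for $i=2$. Thus a Stanley polyomino has no strictly internal edge exactly when each pair of consecutive rows overlaps in one or two cells, which is geometrically clear anyway. Writing $E(u)$ for the specialized series and $E=E(1)$, the construction of Figure~\ref{fig2} gives
\[
E(u)=\frac{xu}{1-xu}+\frac{xu}{1-xu}\Bigl(u\,(E-E_1)+u^2\,(E-E_1-E_2)\Bigr),
\]
where $E_1$ and $E_2$ count the polyominoes whose first row has one or two cells. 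Only $E(1)$ and the constants $E_1,E_2$ appear on the right-hand side, so no kernel method is needed; it suffices to set $u=1$.

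\textbf{Step 2: compute $E_1$ and $E_2$.} Clearly $E_1=x$. I would compute $E_2$ directly rather than rely on the specialized formula for $F_2(u)$. A polyomino whose first row has two cells is either the single row (weight $x^2$), or is obtained from some $Q$ whose first row has $h\ge 2$ cells by taking overlap $i=1$ and adding one cell on the left, which adds one column. Hence
\[
E_2=x^2+x(E-x)=xE.
\]

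\textbf{Step 3: solve.} Setting $u=1$ gives
\[
(1-x)E=x+x\bigl(2E-2x-xE\bigr),
\]
that is, $E(1-3x+x^2)=x-2x^2$. This is the claimed generating function $x(1-2x)/(x^2-3x+1)$.

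\textbf{Step 4: Fibonacci coefficients.} Use the classical identity $\sum_{n\ge0}F_{2n+2}x^n=1/(1-3x+x^2)$. It gives, for $n\ge 2$,
\[
[x^n]E=F_{2n}-2F_{2n-2}.
\]
Since $F_{2n}-F_{2n-2}=F_{2n-1}$, this equals $F_{2n-1}-F_{2n-2}=F_{2n-3}$.

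\textbf{Check and main obstacle.} As a consistency check, the first terms are $x+x^2+2x^3+5x^4+\cdots$. The value $5$ at $n=4$ agrees with Figure~\ref{fig3}, where only one of the five polyominoes has a strictly internal edge, and that one has $p$-exponent $0$. The only delicate point is Step~2: the boundary terms $F_1,F_2$ and the ranges $h\ge 2$, $h\ge 3$ must be tracked correctly, and the small cases $n\le 4$ serve as the check.
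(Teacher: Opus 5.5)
Your proposal is correct and is essentially the paper's argument: you delete the first row, allow only overlaps of one or two cells (so the remaining polyomino has a first row of length at least $2$ or $3$), and use the same boundary terms $E_1=x$ and $E_2=x^2+x(E-x)$, which gives exactly the paper's linear equation; you also carry out the Fibonacci coefficient extraction that the paper leaves implicit. One harmless slip in your sanity check: none of the five polyominoes in Figure~\ref{fig3} has a strictly internal edge. The one you have in mind has an interior point, so its weight carries $q$, not $p$.
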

\begin{proof}  Let $R(x)$ be the generating function, with respect to the number of columns, for Stanley polyominoes $P$ satisfying $\edgint(P)=0$. If $P$ consists of a single row, then its contribution is $\frac{x}{1-x}$. Now  assume that $P$ has at least two rows, and let $Q$ be obtained from $P$ by deleting the first row. Since $\edgint(P)=0$, the second row can overlap the first row in at most two cells. 

If the overlap has size $1$, then the first row contributes $\frac{x}{1-x}$ and $Q$ must have at least two columns, so the contribution is
\[
\frac{x}{1-x}\bigl(R(x)-x\bigr).
\]
If the overlap has size $2$, then $Q$ must have at least three columns.  Since the contribution of polyominoes counted by $R(x)$ with one or two columns is $x+x^2+x\bigl(R(x)-x\bigr)$, the contribution in this case is
\[
\frac{x}{1-x}\Bigl(R(x)-x-x^2-x\bigl(R(x)-x\bigr)\Bigr).
\]
Collecting the above contributions, we obtain the claimed generating function. 
\end{proof}

With a similar, but simpler, argument, we obtain the following result.
\begin{corollary} The generating function, with respect to the number of columns, for Stanley polyominoes $P$ satisfying $\point(P)=0$ is $$\frac{x \left(1-x \right)}{1-2x},$$ and, for $n\ge 2$, the coefficient of $x^n$ in this series is  $2^{n-2}$. 
    \end{corollary}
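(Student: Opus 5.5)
Let $T(x)$ denote the generating function, by number of columns, of Stanley polyominoes $P$ with $\point(P)=0$. I would set up a functional equation for $T(x)$ by removing the first (bottom) row, exactly as in the proof of Corollary~\ref{cor24}. A point is interior precisely when it lies strictly inside the overlap between two consecutive rows. An overlap of $i$ cells creates $i-1$ interior points, as in the proof of Theorem~\ref{pol:areaper}. Hence $\point(P)=0$ holds if and only if every pair of consecutive rows overlaps in exactly one cell.

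With this characterization, the decomposition has two cases.

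\emph{Single row.} If $P$ has one row, its contribution is $\frac{x}{1-x}$.

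\emph{At least two rows.} Otherwise, let $Q$ be obtained from $P$ by deleting the first row; then $Q$ also satisfies $\point(Q)=0$. The second row must overlap the first in exactly one cell. The Stanley condition (the second row ends strictly to the right of the first) forces the first row of $Q$ to have at least $2$ cells, so $Q$ cannot be the one-column polyomino. The Stanley condition also requires the first row of $P$ to start strictly to the left of the second row, so the first row has $k\ge 2$ cells. Of these, $k-1\ge 1$ cells add new columns, giving a factor $\frac{x}{1-x}$. The contribution of this case is therefore $\frac{x}{1-x}\bigl(T(x)-x\bigr)$.

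Combining the two cases gives the equation
\[
T(x)=\frac{x}{1-x}+\frac{x}{1-x}\bigl(T(x)-x\bigr).
\]
Solving it: $(1-2x)\,T(x)=x-x^2$, so $T(x)=\frac{x(1-x)}{1-2x}$. Expanding, $\frac{x(1-x)}{1-2x}=x+\sum_{n\ge2}\bigl(2^{n-1}-2^{n-2}\bigr)x^n$, which yields the coefficient $2^{n-2}$ for $n\ge 2$.

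As a sanity check, under $\phi$ the condition becomes "all valleys at height $0$", since Table~1 lists $\point(P)$ as the sum of valley heights. Such Dyck paths are sequences of pyramids $u^jd^j$. A Dyck path of semilength $n-1$ is then a composition of $n-1$, and there are $2^{n-2}$ of these.

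The only delicate step is the column bookkeeping in the second case: showing that $Q$ is exactly "any valid polyomino except the single cell", and not checking the one- or two-column exclusions as in Corollary~\ref{cor24}. Here this is simpler, because an overlap of one cell only requires $Q$ to have a first row of length at least two. Any $Q$ other than the single cell has at least two cells in its first row, since a first row of length one would force a single column.
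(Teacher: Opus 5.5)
Your proof is correct and takes the same route the paper indicates ("a similar, but simpler, argument" than Corollary~\ref{cor24}): delete the first row, use that $\point(P)=0$ forces every pair of consecutive rows to overlap in exactly one cell, and solve $T(x)=\frac{x}{1-x}+\frac{x}{1-x}\bigl(T(x)-x\bigr)$. Your cross-check through $\phi$ is also valid: all valleys at height $0$ means a sequence of pyramids, hence compositions of $n-1$, which is the same pyramid decomposition the paper uses in Section~3.
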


Since Stanley polyominoes with a given number of columns are in one-to-one correspondence with Dyck paths of the corresponding semilength, and since the enumeration of Dyck paths with respect to various statistics has been widely studied in the literature (see, for instance,  \cite{Deutsch}), we now turn to Stanley polyominoes enumerated by  semiperimeter and area.

\subsection{Enumeration with respect to the semiperimeter.} 
As noted in the introduction, the semiperimeter of a Stanley polyomino is the sum of its numbers of rows and columns. In this subsection, we therefore set $x=y$ and $z=p=q=1$ in \eqref{equa1}, and write  $G(u)=F(x,x,1,1,1;u)$. Then we obtain the following equation
\begin{equation}\label{equa22}
G \! \left(u \right) \left(1+\frac{x^2 u}{\left(x u -1\right) \left(u -1\right)}\right)
 = 
\frac{x^2 u}{1-x u}+\frac{G \! \left(1\right) u^{2} x^2}{\left(x u -1\right) \left(u -1\right)}.
\end{equation}
As in the previous subsection, we use the kernel method (see \cite{kernel,pro}) to determine $G(1)$. The kernel is \[
K(u)=1+\frac{x^2u}{(xu-1)(u-1)},
\]
which can be written as
\[
K(u)=\frac{x(u-r)(u-s)}{(xu-1)(u-1)},
\]
where 
\[r=\frac{1+x-x^{2}-\sqrt{x^{4}-2 x^{3}-x^{2}-2 x +1}}{2 x}, \quad s=\frac{1+x-x^{2}+\sqrt{x^{4}-2 x^{3}-x^{2}-2 x +1}}{2 x}.\]

Hence, we obtain \[G(1)=\frac{r-1}{r}=\frac{1}{2}\left(1 - x + x^2 -\sqrt{1 - 2 x - x^2 - 2 x^3 + x^4}\right),\] 
which is the generating function of the generalized Catalan numbers, up to a shift in the index. This sequence also counts secondary structures of RNA molecules (see \seqnum{A004148}) and peakless Motzkin paths, that is, lattice paths in $\mathbb{N}^2$ starting at the origin, ending on the $x$-axis, consisting of up-steps $u=(1,1)$, down-steps $d=(1,-1)$, and horizontal steps $f=(1,0)$, and avoiding the peaks of the form $ud$. Let $\mathcal{M}_n$ be the set of peakless Motzkin paths with $n$ steps, and let $\mathcal{M}=\cup_{n\geq 0}\mathcal{M}_n$. This suggests a direct combinatorial interpretation in terms of peakless Motzkin paths.

Let us define recursively a map $\chi$ from $\mathcal{M}$ to the set of Stanley polyominoes. If $\epsilon$ denotes the empty path, then $\chi(\epsilon)$ is the Stanley polyomino consisting of a single cell. Now  let $\alpha$ be a nonempty peakless Motzkin path in $\mathcal{M}$. 

\begin{itemize}
\item[-] If $\alpha$ starts with a flat step $f$, say $\alpha=f \beta$ with $\beta\in \mathcal{M}$, then $\chi(\alpha)$ is obtained from $\chi(\beta)$ by adding one cell to the left of the first row.

\item[-] If $\alpha=u \beta d \gamma$, where $\beta$ is nonempty, then $\chi(\alpha)$ is obtained from $P=\chi(\beta\gamma)$ by adding a row with $k$ cells below $P$,  so that the first cell of the new row is shifted one unit to the left of the leftmost cell of the first row of $P$, where $k-1$ is the number of steps of $\alpha$ that end on the $x$-axis (equivalently, $k-2$ is the number of steps of $\gamma$ ending on the $x$-axis).
\end{itemize}

See Figure~\ref{Bijchi} for an illustration of this map on the four peakless Motzkin paths with four steps.

A simple induction shows that, for every $\alpha\in\mathcal{M}_n$, the semiperimeter of $\chi(\alpha)$ is $n+2$. Indeed, the claim holds  for $n=0$, since $\sper(\chi(\epsilon))=2$. Moreover,
\[
\sper(\chi(f\beta))=1+\sper(\chi(\beta)),
\qquad
\sper(\chi(u\beta d\gamma))=2+\sper(\chi(\beta\gamma)),
\]
and these two relations provide the induction step.

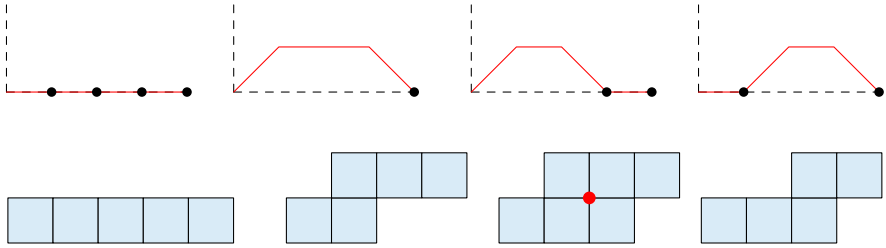
\begin{figure}[ht!]
\begin{tikzpicture}[scale=0.6, line cap=round, line join=round]\draw[dashed] (0,0)--(4,0);
\draw[dashed] (0,0)--(0,2);
\draw[red] (0,0)--(4,0);\fill (1,0) circle (3pt);\fill (2,0) circle (3pt);\fill (3,0) circle (3pt);\fill (4,0) circle (3pt);

\end{tikzpicture}~~~
\begin{tikzpicture}[scale=0.6, line cap=round, line join=round]\draw[dashed] (0,0)--(4,0);
\draw[dashed] (0,0)--(0,2);
\draw[red] (0,0)--(1,1)--(3,1)--(4,0);\fill (4,0) circle (3pt);
\end{tikzpicture}~~~~
\begin{tikzpicture}[scale=0.6, line cap=round, line join=round]\draw[dashed] (0,0)--(4,0);
\draw[dashed] (0,0)--(0,2);
\draw[red] (0,0)--(1,1)--(2,1)--(3,0)--(4,0);\fill (4,0) circle (3pt);\fill (3,0) circle (3pt);
\end{tikzpicture}~~~~\begin{tikzpicture}[scale=0.6, line cap=round, line join=round]\draw[dashed] (0,0)--(4,0);
\draw[dashed] (0,0)--(0,2);
\draw[red] (0,0)--(1,0)--(2,1)--(3,1)--(4,0);\fill (4,0) circle (3pt);\fill (1,0) circle (3pt);
\end{tikzpicture}~~\\[20pt]
\begin{tikzpicture}[scale=0.6, line cap=round, line join=round]
  % --- parámetros: 2 <= i <= h+1 ---
 \definecolor{lightpink}{rgb}{0.85, 0.92, 0.97}
 \draw[fill=lightpink, draw=black] (0,0) rectangle (1,1);
  \draw[fill=lightpink, draw=black] (1,0) rectangle (2,1);
   \draw[fill=lightpink, draw=black] (2,0) rectangle (3,1);
    \draw[fill=lightpink, draw=black] (3,0) rectangle (4,1);\draw[fill=lightpink, draw=black] (4,0) rectangle (5,1);
\end{tikzpicture}~~~~
\begin{tikzpicture}[scale=0.6, line cap=round, line join=round]
  % --- parámetros: 2 <= i <= h+1 ---
 \definecolor{lightpink}{rgb}{0.85, 0.92, 0.97}
 \draw[fill=lightpink, draw=black] (0,0) rectangle (1,1);
  \draw[fill=lightpink, draw=black] (1,0) rectangle (2,1);
   \draw[fill=lightpink, draw=black] (1,1) rectangle (2,2);
    \draw[fill=lightpink, draw=black] (2,1) rectangle (3,2);
     \draw[fill=lightpink, draw=black] (3,1) rectangle (4,2);
\end{tikzpicture}~~
\begin{tikzpicture}[scale=0.6, line cap=round, line join=round]
  % --- parámetros: 2 <= i <= h+1 ---
 \definecolor{lightpink}{rgb}{0.85, 0.92, 0.97}
 \draw[fill=lightpink, draw=black] (0,0) rectangle (1,1);
  \draw[fill=lightpink, draw=black] (1,0) rectangle (2,1);
   \draw[fill=lightpink, draw=black] (2,0) rectangle (3,1);
   \draw[fill=lightpink, draw=black] (1,1) rectangle (2,2);
    \draw[fill=lightpink, draw=black] (2,1) rectangle (3,2);
     \draw[fill=lightpink, draw=black] (3,1) rectangle (4,2);
     \fill[red] (2,1) circle (4pt);
\end{tikzpicture}~~\begin{tikzpicture}[scale=0.6, line cap=round, line join=round]
  % --- parámetros: 2 <= i <= h+1 ---
 \definecolor{lightpink}{rgb}{0.85, 0.92, 0.97}
 \draw[fill=lightpink, draw=black] (0,0) rectangle (1,1);
  \draw[fill=lightpink, draw=black] (1,0) rectangle (2,1);
   \draw[fill=lightpink, draw=black] (2,0) rectangle (3,1);
    \draw[fill=lightpink, draw=black] (2,1) rectangle (3,2);
     \draw[fill=lightpink, draw=black] (3,1) rectangle (4,2);
\end{tikzpicture}
\caption{The bijection $\chi$ maps each of the four peakless Motzkin paths with four steps to the corresponding Stanley polyomino of semiperimeter 6. The number of steps ending on the $x$-axis corresponds to the number of cells in the first row minus one.}\label{Bijchi}
\end{figure}

Now, substituting the above expression for $G(1)$ into \eqref{equa22} and cancelling the factor  $(u-r)$, we obtain the following result.

\begin{theorem} The generating function for Stanley polyominoes with respect to the semiperimeter and the number of cells in the first row is 
\[G(u)=\frac{xu}{r(s-u)}=\frac{\left(x^{2} u -x u -u +2  -\sqrt{x^{4}-2 x^{3}-x^{2}-2 x +1}\, u \right) x^2 u}{2 \left(u^{2} x +x^{2} u -x u -u +1\right)}
.\]
Moreover,  $[u^k]G(u)=x^2(rx)^{k-1}$, and for $n\geq k+1$ and $k\geq 2$,
\begin{multline*}
    [x^nu^k]G(u)=
\sum_{j=0}^{n-k-1}
\frac{k-1}{2j+k-1}\binom{2j+k-1}{j}
(-1)^{n-k-1-j}\\
\times \sum_{b=0}^{\left\lfloor\frac{n-k-1-j}{2}\right\rfloor}
\binom{n+j-b-3}{\,n-k-1-j-b\,}
\binom{n-k-1-j-b}{b}.
\end{multline*}
\end{theorem}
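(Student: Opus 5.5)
The plan is to finish the kernel-method computation of the previous subsection, and then extract coefficients by recognising $r$ as a rescaled Catalan series.

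\emph{Closed form of $G(u)$.} Multiply \eqref{equa22} by $(xu-1)(u-1)$. The left-hand side becomes
\[
G(u)\bigl((xu-1)(u-1)+x^2u\bigr)=G(u)\,x(u-r)(u-s).
\]
The right-hand side becomes $-x^2u(u-1)+x^2u^2G(1)$. Substituting $G(1)=(r-1)/r$, it simplifies to
\[
x^2u\Bigl(1-u+\frac{u(r-1)}{r}\Bigr)=\frac{x^2u(r-u)}{r}.
\]
Cancelling the common factor $(u-r)$ gives $G(u)=xu/(r(s-u))$. The second expression in the statement then follows by inserting the explicit radicals for $r$ and $s$.

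\emph{Coefficient of $u^k$.} The kernel numerator is $xu^2-(1+x-x^2)u+1$, so $rs=1/x$ and hence $1/s=xr$. Expanding the geometric series,
\[
G(u)=\frac{xu}{rs}\cdot\frac{1}{1-u/s}=x^2u\sum_{j\ge 0}(xru)^j .
\]
This gives $[u^k]G(u)=x^2(rx)^{k-1}$, and therefore
\[
[x^nu^k]G(u)=[x^{n-k-1}]\,r^{k-1}.
\]

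\emph{Coefficients of $r^{k-1}$.} This is the main step. The series $r$ satisfies $r=\lambda(1+xr^2)$ with $\lambda=1/(1+x-x^2)$. Setting $r=\lambda R$ gives $R=1+x\lambda^2R^2$, so $R=C(x\lambda^2)$, where $C(t)$ is the Catalan generating function. Using the standard power formula $[t^j]C(t)^{m}=\frac{m}{2j+m}\binom{2j+m}{j}$, we get
\[
r^{k-1}=\sum_{j\ge0}\frac{k-1}{2j+k-1}\binom{2j+k-1}{j}\,x^j\,(1+x-x^2)^{-(2j+k-1)} .
\]
It remains to compute $[x^{N}](1+x(1-x))^{-M}$ with $N=n-k-1-j$ and $M=2j+k-1$. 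Expand by the negative binomial theorem as $\sum_a(-1)^a\binom{M+a-1}{a}x^a(1-x)^a$, and take $a=N-b$ with $b$ copies of $-x$ chosen from $(1-x)^a$. This yields
\[
(-1)^N\sum_{b}\binom{M+N-b-1}{N-b}\binom{N-b}{b}.
\]
Here $M+N-b-1=n+j-b-3$, and $b$ ranges up to $\lfloor N/2\rfloor$. Summing over $j$ from $0$ to $n-k-1$ then gives exactly the stated double sum.

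The only delicate point is this last step: finding the substitution $r=\lambda C(x\lambda^2)$ that exposes the Catalan convolution numbers. The rest is routine bookkeeping of indices and signs.
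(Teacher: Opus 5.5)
Your proposal is correct and follows the paper's proof essentially step for step. The steps match: the kernel-method cancellation of $(u-r)$, the identification $r=\lambda\,C(x\lambda^2)$ with $\lambda=1/(1+x-x^2)$ (which the paper writes as $xr=\frac{x}{t}C(x/t^2)$ with $t=1+x-x^2$), the Catalan power formula, and the negative-binomial expansion of $(1+x-x^2)^{-M}$ — the only addition being that you explicitly derive $[u^k]G(u)=x^2(rx)^{k-1}$ from $rs=1/x$ and a geometric expansion, which the paper asserts without derivation.
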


\begin{proof}
To extract the coefficients of $x^n u^k$, we set $t=1+x-x^2$. Since
\[
xr=\frac{t-\sqrt{t^2-4x}}{2}
=\frac{x}{t}\,C\!\left(\frac{x}{t^2}\right),
\]
where $C(z)=(1-\sqrt{1-4z})/2z$ 
is the generating function of the Catalan numbers, it follows that, for $k\ge 2$,
\[
[u^k]G(u)
=
x^2\left(\frac{x}{t}C\!\left(\frac{x}{t^2}\right)\right)^{k-1}
=
x^{k+1}t^{-(k-1)}C\!\left(\frac{x}{t^2}\right)^{k-1}.
\]
We now use the classical identity (cf. \cite{Wilf})
\[
C(z)^m=\sum_{j\ge 0}\frac{m}{2j+m}\binom{2j+m}{j}z^j
\qquad (m\ge 1).
\]
Applying this with $m=k-1$, we obtain
\[
[u^k]G(u)
=
\sum_{j\ge 0}
\frac{k-1}{2j+k-1}\binom{2j+k-1}{j}
x^{k+1+j}t^{-(2j+k-1)}.
\]
Therefore, for $k\ge 2$,
\[
[x^n u^k]G(u)
=
\sum_{j=0}^{n-k-1}
\frac{k-1}{2j+k-1}\binom{2j+k-1}{j}
[x^{\,n-k-1-j}](1+x-x^2)^{-(2j+k-1)}.
\]
Finally, for any integer $m\ge 1$,
\begin{align*}
(1+x-x^2)^{-m}
&=\sum_{a\ge 0}\binom{m+a-1}{a}(-1)^a x^a(1-x)^a,
\end{align*}
and hence
\[
[x^\ell](1+x-x^2)^{-m}
=
(-1)^\ell
\sum_{b=0}^{\lfloor \ell/2\rfloor}
\binom{m+\ell-b-1}{\ell-b}\binom{\ell-b}{b}.
\]
Substituting $m=2j+k-1$ and $\ell=n-k-1-j$, we obtain, for $k\ge 2$,
\begin{multline*}
    [x^n u^k]G(u)
=
\sum_{j=0}^{n-k-1}
\frac{k-1}{2j+k-1}\binom{2j+k-1}{j}
(-1)^{n-k-1-j}\\
\times \sum_{b=0}^{\left\lfloor\frac{n-k-1-j}{2}\right\rfloor}
\binom{n+j-b-3}{\,n-k-1-j-b\,}
\binom{n-k-1-j-b}{b}.
\end{multline*}
This completes the proof.
\end{proof}

The first few terms of the expansion of $G(u)$ as a power series in $x$ are
\begin{multline*}x^2u + u^2x^3 + u^3x^4 + (u^4 + u^2)x^5 + u^2(u^3 + 2u + 1)x^6 + \bm{u^2(u^4 + 3u^2 + 2u + 2)x^7} +\\ u^2(u^5 + 4u^3 + 3u^2 + 5u + 4)x^8 + O(x^9).\end{multline*}
The bold coefficient of $x^7$ corresponds to the Stanley polyominoes shown in   Figure~\ref{semiper}.
\medskip 

Note that the coefficients  of $G(u)$, arranged by powers of $x$ and $u$, coincide with sequence \seqnum{A162986} in \cite{Sloa}, which counts Dyck paths avoiding $uuu$ and $ddd$ according to  semilength and the number of peaks $ud$  starting at level $0$ (also called hills). This suggests a second bijective interpretation of Stanley polyominoes with fixed semiperimeter: 

Let $\mathcal{D}$ denote the set of Dyck paths avoiding $uuu$ and $ddd$ and let us recursively define a map $\chi'$ from $\mathcal{D}$ to the set of Stanley polyominoes. Note that the definition of $\chi'$ is very similar to that of $\chi$.

If $\epsilon$ denotes the empty path, then $\chi'(\epsilon)$ is the Stanley polyomino consisting of a single row with two cells. Now let $\alpha$ be a nonempty Dyck path in $\mathcal{D}$. 

\begin{itemize}
\item[-] If $\alpha$ starts with $ud$, say $\alpha=ud \beta$ with $\beta\in \mathcal{D}$, then $\chi'(\alpha)$ is obtained from $\chi'(\beta)$ by adding one cell on the left of the first row. 

\item[-] Otherwise, $\alpha$  can be written as $\alpha=u\beta udd \gamma$, where $\beta,\gamma\in \mathcal{D}$. Then $\chi'(\alpha)$ is obtained from $P=\chi'(\beta\gamma)$ by adding a row with $k$ cells below $P$,  so that the first cell of the new row is shifted one unit to the left of the leftmost cell of the first row of $P$,  where $k-2$ is the number of peaks $ud$ ending on the $x$-axis in $\gamma$.
\end{itemize}

A simple induction shows that, for every $\alpha\in\mathcal{D}$, the semiperimeter of $\chi'(\alpha)$ is the semilength of $\alpha$ plus 3. Moreover, we have 
\[\first(\chi'(ud \beta))=1+\first(\chi'(\beta)),
\qquad 
\first(\chi'(u\beta udd \gamma))=\first(\chi'(\gamma)),
\]
and these two relations (with an induction) imply that $\first(\chi'(\alpha))$ equals the number of hills in $\alpha$ plus 2.

See Figure~\ref{semiper} for an illustration of this map.

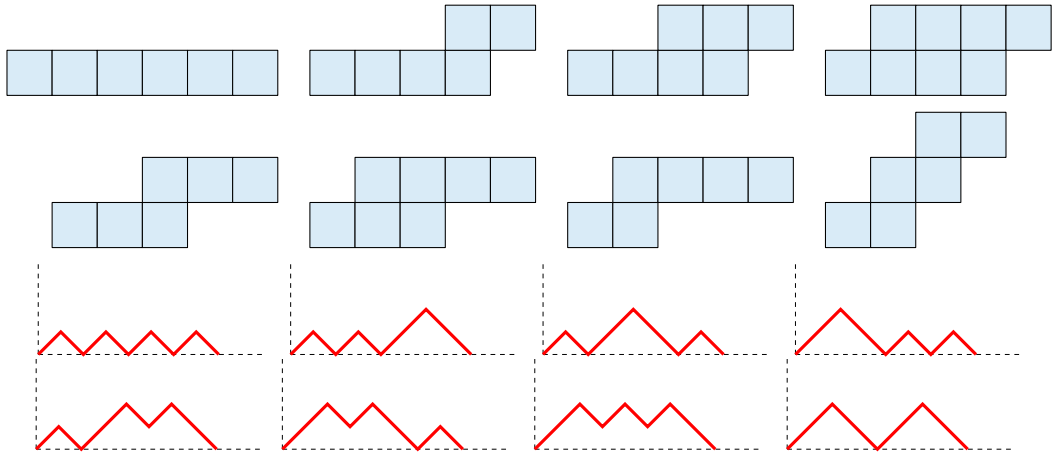
\begin{figure}[ht!]
\begin{tikzpicture}[scale=0.6, line cap=round, line join=round]
  % --- parámetros: 2 <= i <= h+1 ---
 \definecolor{lightpink}{rgb}{0.85, 0.92, 0.97}
 \draw[fill=lightpink, draw=black] (0,0) rectangle (1,1);
  \draw[fill=lightpink, draw=black] (1,0) rectangle (2,1);
   \draw[fill=lightpink, draw=black] (2,0) rectangle (3,1);
    \draw[fill=lightpink, draw=black] (3,0) rectangle (4,1);
     \draw[fill=lightpink, draw=black] (4,0) rectangle (5,1);
         \draw[fill=lightpink, draw=black] (5,0) rectangle (6,1);
\end{tikzpicture}~~
\begin{tikzpicture}[scale=0.6, line cap=round, line join=round]
  % --- parámetros: 2 <= i <= h+1 ---
 \definecolor{lightpink}{rgb}{0.85, 0.92, 0.97}
 \draw[fill=lightpink, draw=black] (0,0) rectangle (1,1);
  \draw[fill=lightpink, draw=black] (1,0) rectangle (2,1);
   \draw[fill=lightpink, draw=black] (2,0) rectangle (3,1);
    \draw[fill=lightpink, draw=black] (3,0) rectangle (4,1);
     \draw[fill=lightpink, draw=black] (3,1) rectangle (4,2);
      \draw[fill=lightpink, draw=black] (4,1) rectangle (5,2);
\end{tikzpicture}~~
\begin{tikzpicture}[scale=0.6, line cap=round, line join=round]
  % --- parámetros: 2 <= i <= h+1 ---
 \definecolor{lightpink}{rgb}{0.85, 0.92, 0.97}
 \draw[fill=lightpink, draw=black] (0,0) rectangle (1,1);
  \draw[fill=lightpink, draw=black] (1,0) rectangle (2,1);
   \draw[fill=lightpink, draw=black] (2,0) rectangle (3,1);
    \draw[fill=lightpink, draw=black] (3,0) rectangle (4,1);
     \draw[fill=lightpink, draw=black] (2,1) rectangle (3,2);
     \draw[fill=lightpink, draw=black] (3,1) rectangle (4,2);
      \draw[fill=lightpink, draw=black] (4,1) rectangle (5,2);
\end{tikzpicture}~~
\begin{tikzpicture}[scale=0.6, line cap=round, line join=round]
  % --- parámetros: 2 <= i <= h+1 ---
 \definecolor{lightpink}{rgb}{0.85, 0.92, 0.97}
 \draw[fill=lightpink, draw=black] (0,0) rectangle (1,1);
  \draw[fill=lightpink, draw=black] (1,0) rectangle (2,1);
   \draw[fill=lightpink, draw=black] (2,0) rectangle (3,1);
    \draw[fill=lightpink, draw=black] (3,0) rectangle (4,1);
    \draw[fill=lightpink, draw=black] (1,1) rectangle (2,2);
     \draw[fill=lightpink, draw=black] (2,1) rectangle (3,2);
     \draw[fill=lightpink, draw=black] (3,1) rectangle (4,2);
      \draw[fill=lightpink, draw=black] (4,1) rectangle (5,2);
\end{tikzpicture}\\[5pt]
\begin{tikzpicture}[scale=0.6, line cap=round, line join=round]
  % --- parámetros: 2 <= i <= h+1 ---
 \definecolor{lightpink}{rgb}{0.85, 0.92, 0.97}
 \draw[fill=lightpink, draw=black] (0,0) rectangle (1,1);
  \draw[fill=lightpink, draw=black] (1,0) rectangle (2,1);
   \draw[fill=lightpink, draw=black] (2,0) rectangle (3,1);
     \draw[fill=lightpink, draw=black] (2,1) rectangle (3,2);
     \draw[fill=lightpink, draw=black] (3,1) rectangle (4,2);
      \draw[fill=lightpink, draw=black] (4,1) rectangle (5,2);
\end{tikzpicture}~~
\begin{tikzpicture}[scale=0.6, line cap=round, line join=round]
  % --- parámetros: 2 <= i <= h+1 ---
 \definecolor{lightpink}{rgb}{0.85, 0.92, 0.97}
 \draw[fill=lightpink, draw=black] (0,0) rectangle (1,1);
  \draw[fill=lightpink, draw=black] (1,0) rectangle (2,1);
   \draw[fill=lightpink, draw=black] (2,0) rectangle (3,1);
    \draw[fill=lightpink, draw=black] (1,1) rectangle (2,2);
     \draw[fill=lightpink, draw=black] (2,1) rectangle (3,2);
     \draw[fill=lightpink, draw=black] (3,1) rectangle (4,2);
      \draw[fill=lightpink, draw=black] (4,1) rectangle (5,2);
\end{tikzpicture}~~
\begin{tikzpicture}[scale=0.6, line cap=round, line join=round]
  % --- parámetros: 2 <= i <= h+1 ---
 \definecolor{lightpink}{rgb}{0.85, 0.92, 0.97}
 \draw[fill=lightpink, draw=black] (0,0) rectangle (1,1);
  \draw[fill=lightpink, draw=black] (1,0) rectangle (2,1);
    \draw[fill=lightpink, draw=black] (1,1) rectangle (2,2);
     \draw[fill=lightpink, draw=black] (2,1) rectangle (3,2);
     \draw[fill=lightpink, draw=black] (3,1) rectangle (4,2);
      \draw[fill=lightpink, draw=black] (4,1) rectangle (5,2);
\end{tikzpicture}~~
\begin{tikzpicture}[scale=0.6, line cap=round, line join=round]
  % --- parámetros: 2 <= i <= h+1 ---
 \definecolor{lightpink}{rgb}{0.85, 0.92, 0.97}
 \draw[fill=lightpink, draw=black] (0,0) rectangle (1,1);
  \draw[fill=lightpink, draw=black] (1,0) rectangle (2,1);
   \draw[fill=lightpink, draw=black] (1,1) rectangle (2,2);
    \draw[fill=lightpink, draw=black] (2,1) rectangle (3,2);
     \draw[fill=lightpink, draw=black] (2,2) rectangle (3,3);
       \draw[fill=lightpink, draw=black] (3,2) rectangle (4,3);
\end{tikzpicture}\\[5pt]
  \scalebox{0.6}{\begin{tikzpicture}
\draw[dashed] (0,0)--(5,0);
\draw[dashed] (0,0)--(0,2);
\draw[line width=0.7mm,red] (0,0)--(0.5,0.5)--(1,0)--(1.5,0.5)--(2,0)--(2.5,0.5)--(3,0)--(3.5,0.5)--(4,0);
\end{tikzpicture} \quad
\begin{tikzpicture}
\draw[dashed] (0,0)--(5,0);
\draw[dashed] (0,0)--(0,2);
\draw[line width=0.7mm,red] (0,0)--(0.5,0.5)--(1,0)--(1.5,0.5)--(2,0)--(3,1)--(4,0);
\end{tikzpicture}
\quad
\begin{tikzpicture}
\draw[dashed] (0,0)--(5,0);
\draw[dashed] (0,0)--(0,2);
\draw[line width=0.7mm,red] (0,0)--(0.5,0.5)--(1,0)--(2,1)--(3,0)--(3.5,0.5)--(4,0);
\end{tikzpicture}
\quad 
\begin{tikzpicture}
\draw[dashed] (0,0)--(5,0);
\draw[dashed] (0,0)--(0,2);
\draw[line width=0.7mm,red] (0,0)--(1,1)--(2,0)--(2.5,0.5)--(3,0)--(3.5,0.5)--(4,0);
\end{tikzpicture}}
\\
\scalebox{0.6}{
\begin{tikzpicture}
\draw[dashed] (0,0)--(5,0);
\draw[dashed] (0,0)--(0,2);
\draw[line width=0.7mm,red] (0,0)--(0.5,0.5)--(1,0)--(2,1)--(2.5,0.5)--(3,1)--(4,0);
\end{tikzpicture}\quad 
\begin{tikzpicture}
\draw[dashed] (0,0)--(5,0);
\draw[dashed] (0,0)--(0,2);
\draw[line width=0.7mm,red] (0,0)--(1,1)--(1.5,0.5)--(2,1)--(3,0)--(3.5,0.5)--(4,0);
\end{tikzpicture}
\quad 
\begin{tikzpicture}
\draw[dashed] (0,0)--(5,0);
\draw[dashed] (0,0)--(0,2);
\draw[line width=0.7mm,red] (0,0)--(1,1)--(1.5,0.5)--(2,1)--(2.5,0.5)--(3,1)--(4,0);
\end{tikzpicture}
\quad 
\begin{tikzpicture}
\draw[dashed] (0,0)--(5,0);
\draw[dashed] (0,0)--(0,2);
\draw[line width=0.7mm,red] (0,0)--(1,1)--(2,0)--(3,1)--(4,0);
\end{tikzpicture}
}
\caption{The eight Stanley polyominoes with semiperimeter 7. Their contributions to $G(u)$ are, from left to right,  $u^6x^7$, $u^4x^7$, $u^4x^7$, $u^4x^7$, $u^3x^7$, $u^3x^7$, $u^2x^7$, and $u^2x^7$. The corresponding preimages under $\chi'$ are shown below, in the same order.}\label{semiper}
\end{figure}

By calculating $\partial_u(G(u))\vert_{u=1}$ 
we obtain the following:

\begin{corollary} The generating function, with respect to the semiperimeter, for the total number of cells in the first row over all Stanley polyominoes is $$\frac{xs}{r(s - 1)^2}=\frac{1 - 2 x + x^2 - 2 x^3 + x^4 - (1 - x + x^2)\sqrt{1 - 2 x - x^2 - 2 x^3 + x^4} }{2x^2}.$$
Its coefficients are given by sequence \seqnum{A089735}, the self-convolution of the RNA secondary structure numbers (\seqnum{A004148}).
    \end{corollary}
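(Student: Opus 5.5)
The plan is to differentiate the closed form of $G(u)$ from the preceding theorem directly, simplify using the two algebraic relations satisfied by $r$ and $s$, and then identify the result with the square of the RNA secondary structure generating function. Since $G(u)=\frac{xu}{r(s-u)}$, we get
\[
\partial_u G(u)=\frac{x}{r(s-u)}+\frac{xu}{r(s-u)^2}=\frac{xs}{r(s-u)^2},
\]
and setting $u=1$ gives the first expression $\frac{xs}{r(s-1)^2}$. The combinatorial meaning is immediate: $\partial_u G(u)\vert_{u=1}=\sum_P \first(P)\,x^{\sper(P)}$.

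To simplify, expand the numerator of the kernel in \eqref{equa22}:
\[
(xu-1)(u-1)+x^2u=xu^2-(1+x-x^2)u+1=x(u-r)(u-s).
\]
This gives $rs=1/x$ and $r+s=(1+x-x^2)/x$. From $xs=1/r$ and $s-1=(1-xr)/(xr)$, we obtain
\[
\frac{xs}{r(s-1)^2}=\frac{1}{r^2}\cdot\frac{x^2r^2}{(1-xr)^2}=\frac{x^2}{(1-xr)^2}.
\]

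Next, with $\Delta=x^4-2x^3-x^2-2x+1$, the explicit value of $r$ gives $1-xr=\tfrac12\bigl(1-x+x^2+\sqrt{\Delta}\bigr)$. Since $(1-x+x^2)^2-\Delta=4x^2$, rationalizing yields
\[
\frac{1}{1-xr}=\frac{1-x+x^2-\sqrt{\Delta}}{2x^2}=:M(x).
\]
This $M(x)$ is exactly the generating function of \seqnum{A004148}, consistent with $G(1)=x^2M(x)$ found earlier. Hence the derivative equals $x^2M(x)^2$. This proves the self-convolution claim: the coefficients are those of \seqnum{A089735}, shifted by $x^2$.

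Finally, the explicit radical form follows without squaring the radical. Since $x^2M$ is the root of $x^2M^2-(1-x+x^2)M+1=0$, we have $x^2M^2=(1-x+x^2)M-1$. Substituting the expression for $M$ gives
\[
\frac{(1-x+x^2)^2-(1-x+x^2)\sqrt{\Delta}-2x^2}{2x^2}.
\]
Since $(1-x+x^2)^2-2x^2=1-2x+x^2-2x^3+x^4$, this matches the stated formula.

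The only delicate point is the choice of root. We must use $r$ (the power-series root) rather than $s$, and the sign of $\sqrt{\Delta}$ must be the one making $M$ a formal power series. This is checked by noting that $M(0)=1$.
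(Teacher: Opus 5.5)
Your proposal is correct and follows the paper's route of computing $\partial_u G(u)\vert_{u=1}$ from $G(u)=xu/(r(s-u))$. Your reduction via $rs=1/x$ to $x^2/(1-xr)^2=x^2M(x)^2=G(1)^2/x^2$ goes further than the paper: it actually proves the self-convolution (\seqnum{A089735}) claim, which the paper only asserts. One wording slip: it is $M$, not $x^2M$, that is a root of $x^2M^2-(1-x+x^2)M+1=0$, although the identity $x^2M^2=(1-x+x^2)M-1$ that you then use is right.
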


Using an argument similar to that of  Corollary~\ref{cor24}, we obtain the following.

\begin{corollary} The generating function, with respect to the semiperimeter, for Stanley polyominoes with no strictly internal edges (that is, satisfying $\edgint(P)=0$)  is $$\frac{x^{3}-x}{x^{2}+x -1}.$$
Moreover, the coefficient of $x^n$ in the series is the Fibonacci number $F_{n-1}$ for $n\geq 2$.
    \end{corollary}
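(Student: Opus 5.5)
We follow the proof of Corollary~\ref{cor24}: decompose $P$ by deleting its first (bottom) row, but weight by semiperimeter instead of number of columns. Let $R(x)$ be the generating function, by semiperimeter, of Stanley polyominoes with $\edgint(P)=0$. Since $\sper(P)=\col(P)+\row(P)$, a single row of $h\ge1$ cells has weight $x^{h+1}$. Summing over $h$, the one-row polyominoes contribute $\frac{x^2}{1-x}$.

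Now let $P$ have at least two rows. Let $k$ be the length of its first row and $i$ the overlap of the first row with the second, and let $Q$ be $P$ with the first row deleted. Going from $Q$ to $P$ adds one row and $k-i\ge1$ columns, so the weight factor is $x^{1+(k-i)}$, and summing over $k-i\ge1$ gives $\frac{x^2}{1-x}$.

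The new bottom row creates $i-1$ interior points, all on the line separating rows~1 and~2, and $i-2$ strictly internal edges on that line when $i\ge2$. These cannot join interior points of $Q$, which lie at greater heights. Hence $\edgint(P)=0$ exactly when $\edgint(Q)=0$ and $i\in\{1,2\}$. We treat the two values of $i$ separately:

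\begin{itemize}
\item[(a)] If $i=1$, then $Q$ must have at least $2$ cells in its first row, so we remove the single cell, of weight $x^2$. The contribution is $\frac{x^2}{1-x}\bigl(R(x)-x^2\bigr)$.
\item[(b)] If $i=2$, then $Q$ must have at least $3$ cells in its first row. Applying the same decomposition to polyominoes whose first row has exactly $2$ cells, those are the $2$-cell row (weight $x^3$) and those with overlap $1$ over some $Q'$ (weight $x^2\bigl(R(x)-x^2\bigr)$). The contribution is therefore
\[
\frac{x^2}{1-x}\Bigl(R(x)-x^2-x^3-x^2\bigl(R(x)-x^2\bigr)\Bigr).
\]
\end{itemize}

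Adding the one-row term and the contributions (a) and (b) gives a linear equation in $R(x)$. Solving it should produce a rational function with denominator related to $1-x-x^2$, which we then reduce to $\frac{x^3-x}{x^2+x-1}$. The Fibonacci statement follows by expanding $\frac{x(1-x^2)}{1-x-x^2}$ and using $F_{n}-F_{n-2}=F_{n-1}$ to identify $[x^n]$ with the Fibonacci number $F_{n-1}$.

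The main obstacle is not the decomposition but the bookkeeping of the normalization: which base cases are subtracted, and whether the smallest semiperimeter is indexed from $x^2$. Before trusting the final simplification, we will check the equation against a direct enumeration of small cases, namely semiperimeters $2$ through $7$, using Figure~\ref{fig3} and Figure~\ref{semiper}. Any discrepancy in the initial terms would then be traced to the base-case subtraction in (b), or to the offset between the paper's indexing $F_{n-1}$ and the semiperimeter.

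A rough hand count of those figures gives $1,1,1,2,4,7$ for semiperimeters $2$ through $7$. This does not match the expansion $x+x^2+x^3+2x^4+3x^5+5x^6+\cdots$ of the stated formula. So this check is expected to be decisive, and the proof cannot be completed until the counts are reconciled with the stated formula and its Fibonacci indexing.
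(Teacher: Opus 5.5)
Your decomposition is exactly the ``argument similar to that of Corollary~\ref{cor24}'' that the paper invokes, and the paper gives no proof beyond that remark. Your equation is also correct. The step that fails is the last one, where you expect the solution to reduce to $\frac{x^3-x}{x^2+x-1}$. It cannot, because the statement is false. With $b=\frac{x^2}{1-x}$, your equation reads $R=b\bigl(1-2x^2-x^3+x^4+(2-x^2)R\bigr)$. Its solution is
\[
R(x)=\frac{x^2\,(1-2x^2-x^3+x^4)}{1-x-2x^2+x^4}=x^2+x^3+x^4+2x^5+4x^6+7x^7+14x^8+26x^9+\cdots .
\]
This agrees with your hand count and with direct enumeration at semiperimeters $8$ and $9$. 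There are $17$ and $37$ Stanley polyominoes there, of which $3$ and $11$ have two consecutive rows overlapping in at least $3$ cells. The result is not a Fibonacci series.

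The discrepancy is not a base-case or indexing slip that could be reconciled. The stated series $x+x^2+x^3+2x^4+3x^5+5x^6+8x^7+\cdots$ fails in three ways:
\begin{enumerate}
\item It has an $x^1$ term, although every polyomino has semiperimeter at least $2$.
\item Its coefficient $5$ at $x^6$ exceeds the total number $4$ of Stanley polyominoes of semiperimeter $6$, as given by the paper's own expansion of $G(u)$.
\item Shifted by one, it predicts $5$ at semiperimeter $7$, whereas there are $7$. These are all of Figure~\ref{semiper} except the two-row polyomino with rows in columns $1$--$4$ and $2$--$5$, whose two interior points are joined by an edge.
\end{enumerate}
Your own equation shows what the formula actually encodes. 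Keeping only the one-row term and term (a) gives $R_0=\frac{x^2(1-x^2)}{1-x-x^2}=x\cdot\frac{x^3-x}{x^2+x-1}$. This is the semiperimeter generating function for $\point(P)=0$, with $[x^n]R_0=F_{n-2}$ for $n\ge 3$. So the corollary appears to conflate $\edgint$ with $\point$ and to be off by one in the exponent.

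Instead of leaving the proof pending, finish the computation and conclude that the statement as written is false. Cite the counterexample at semiperimeter $6$ (or $7$) and the corrected formula above.
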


\subsection{Enumeration with respect to the area.} 

The variable $z$ tracks the area. Setting $x=y=p=q=1$ in Theorem~\ref{pol:areaper},  we obtain the following.

\begin{theorem}\label{thmarea} The generating function for Stanley polyominoes with respect to the area is  
    \[ F(1,1,z,1,1)=\frac{\displaystyle\sum_{\ell\ge 0}\frac{\left(-1\right)^{\ell}z^{\frac{\left(\ell+2 \right) \left(\ell+1\right)}{2}}}{(z,z)_\ell^2 \left(1-z^{\ell+1}\right)}}{\displaystyle 1-\sum_{\ell\geq 0}\frac{\left(-1\right)^{\ell}z^{\frac{\left(\ell+4 \right) \left(\ell+1 \right)}{2}} }{(z,z)_{\ell+1}^2}},\]
where $(a,b)_k$ denotes the Pochhammer symbol defined by
\[(a,b)_k=\prod_{j=0}^{k-1}(1-ab^j).\]
\end{theorem}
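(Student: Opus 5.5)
The plan is to specialize the closed form of Theorem~\ref{pol:areaper} at $x=y=p=q=1$ and simplify each piece. First I will check that the specialization makes sense. For a fixed area there are only finitely many Stanley polyominoes, so $F(1,1,z,1,1)$ is a well-defined formal power series in $z$. The summands of $G$ and $H$ also specialize to formal power series whose $z$-adic valuations grow quadratically in $\ell$, so both sums converge formally. The denominators that appear, namely $xz(zpq)^\ell-1$ and $(zpq)^\ell zqp-1$, become $z^{\ell+1}-1$, which is invertible in $\mathbb{Z}[[z]]$.

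Alternatively, and more robustly, I can rerun the iteration of \eqref{equa1} directly at $x=y=p=q=1$, where $v=z$. There $C(uz^i)$ has $z$-valuation at least $1$, so the iterated sum converges $z$-adically without any analytic assumptions, and setting $u=1$ gives the same quotient formula as in the proof of Theorem~\ref{pol:areaper}.

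Next I simplify the Pochhammer factors. At $p=q=x=1$ both $(xz,zpq)_\ell$ and $(zpq,zpq)_\ell$ become $(z,z)_\ell$, hence
\[
\Delta_\ell=\frac{(-1)^\ell}{(z,z)_\ell^2}.
\]
In $G$ the first numerator term carries the factor $p-1$ and vanishes. Every power of $pq$ equals $1$, so the summand reduces to
\[
\frac{-z^{(\ell+2)(\ell+1)/2}}{z^{\ell+1}-1}\,\Delta_\ell=\frac{(-1)^\ell z^{(\ell+2)(\ell+1)/2}}{(z,z)_\ell^2\,(1-z^{\ell+1})},
\]
which is exactly the numerator claimed in Theorem~\ref{thmarea}.

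In $H$ the term containing $p-1$ again drops out, leaving
\[
\frac{-z^{(\ell+4)(\ell+1)/2}}{(z^{\ell+1}-1)^2}\,\Delta_\ell=-\frac{(-1)^\ell z^{(\ell+4)(\ell+1)/2}}{(z,z)_\ell^2(1-z^{\ell+1})^2}=-\frac{(-1)^\ell z^{(\ell+4)(\ell+1)/2}}{(z,z)_{\ell+1}^2},
\]
using $(z,z)_\ell(1-z^{\ell+1})=(z,z)_{\ell+1}$. Therefore $1+H$ becomes $1-\sum_{\ell\ge0}(-1)^\ell z^{(\ell+4)(\ell+1)/2}/(z,z)_{\ell+1}^2$, which is the stated denominator.

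The computation itself is routine. The only step needing care is justifying the substitution $x=y=p=q=1$ into a formula that was derived under the analytic hypotheses $|u|,|z|,|p|,|q|\le1$. I will handle this through the $z$-adic convergence argument above rather than by analytic continuation. Finally, I will confirm against the first few coefficients (areas $1$ to $6$), which can be read off the expansion of $F$ given after Theorem~\ref{pol:areaper}.
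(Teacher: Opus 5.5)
Your proposal is correct and follows the paper's route: the paper obtains Theorem~\ref{thmarea} simply by setting $x=y=p=q=1$ in Theorem~\ref{pol:areaper}, your simplifications of $\Delta_\ell$, $G$ and $H$ (including $(z,z)_\ell(1-z^{\ell+1})=(z,z)_{\ell+1}$) are right, and your $z$-adic justification (or rerunning the iteration of \eqref{equa1} with $v=z$) supplies rigor the paper leaves implicit. One small caveat on your sanity check: the $x$-expansion shown after Theorem~\ref{pol:areaper} stops at $x^5$, so it misses the single row of six cells and gives $5$ instead of $6$ at area $6$; check against the $z$-expansion printed after Theorem~\ref{thmarea} instead.
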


The first few terms in the expansion of $F(1,1,z,1,1)$ as a power series in $z$  are
$$z+z^2+z^3+2z^4+3z^5+\bm{6z^6}+10z^7+19z^8+34z^9+63z^{10}+115z^{11}+O(z^{12}).$$
The bold coefficient of $z^6$ corresponds to the Stanley polyominoes shown in   Figure~\ref{area}.
\medskip 

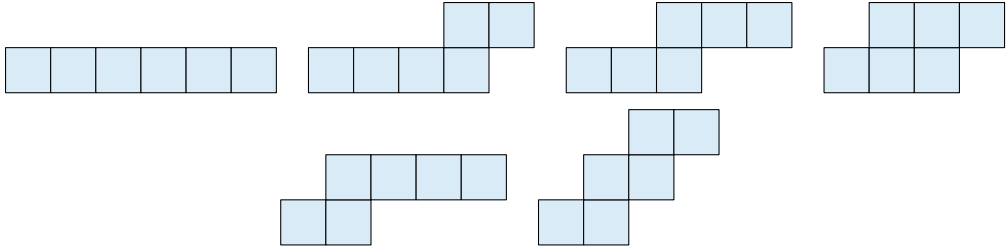
\begin{figure}[ht!]
\begin{tikzpicture}[scale=0.6, line cap=round, line join=round]
  % --- parámetros: 2 <= i <= h+1 ---
 \definecolor{lightpink}{rgb}{0.85, 0.92, 0.97}
 \draw[fill=lightpink, draw=black] (0,0) rectangle (1,1);
  \draw[fill=lightpink, draw=black] (1,0) rectangle (2,1);
   \draw[fill=lightpink, draw=black] (2,0) rectangle (3,1);
    \draw[fill=lightpink, draw=black] (3,0) rectangle (4,1);
     \draw[fill=lightpink, draw=black] (4,0) rectangle (5,1);
         \draw[fill=lightpink, draw=black] (5,0) rectangle (6,1);
\end{tikzpicture}~~
\begin{tikzpicture}[scale=0.6, line cap=round, line join=round]
  % --- parámetros: 2 <= i <= h+1 ---
 \definecolor{lightpink}{rgb}{0.85, 0.92, 0.97}
 \draw[fill=lightpink, draw=black] (0,0) rectangle (1,1);
  \draw[fill=lightpink, draw=black] (1,0) rectangle (2,1);
   \draw[fill=lightpink, draw=black] (2,0) rectangle (3,1);
    \draw[fill=lightpink, draw=black] (3,0) rectangle (4,1);
     \draw[fill=lightpink, draw=black] (3,1) rectangle (4,2);
      \draw[fill=lightpink, draw=black] (4,1) rectangle (5,2);
\end{tikzpicture}~~
\begin{tikzpicture}[scale=0.6, line cap=round, line join=round]
  % --- parámetros: 2 <= i <= h+1 ---
 \definecolor{lightpink}{rgb}{0.85, 0.92, 0.97}
 \draw[fill=lightpink, draw=black] (0,0) rectangle (1,1);
  \draw[fill=lightpink, draw=black] (1,0) rectangle (2,1);
   \draw[fill=lightpink, draw=black] (2,0) rectangle (3,1);
     \draw[fill=lightpink, draw=black] (2,1) rectangle (3,2);
     \draw[fill=lightpink, draw=black] (3,1) rectangle (4,2);
      \draw[fill=lightpink, draw=black] (4,1) rectangle (5,2);
\end{tikzpicture}~~
\begin{tikzpicture}[scale=0.6, line cap=round, line join=round]
  % --- parámetros: 2 <= i <= h+1 ---
 \definecolor{lightpink}{rgb}{0.85, 0.92, 0.97}
 \draw[fill=lightpink, draw=black] (0,0) rectangle (1,1);
  \draw[fill=lightpink, draw=black] (1,0) rectangle (2,1);
   \draw[fill=lightpink, draw=black] (2,0) rectangle (3,1);
    \draw[fill=lightpink, draw=black] (1,1) rectangle (2,2);
     \draw[fill=lightpink, draw=black] (2,1) rectangle (3,2);
     \draw[fill=lightpink, draw=black] (3,1) rectangle (4,2);
 \end{tikzpicture}\\[5pt]

\begin{tikzpicture}[scale=0.6, line cap=round, line join=round]
  % --- parámetros: 2 <= i <= h+1 ---
 \definecolor{lightpink}{rgb}{0.85, 0.92, 0.97}
 \draw[fill=lightpink, draw=black] (0,0) rectangle (1,1);
  \draw[fill=lightpink, draw=black] (1,0) rectangle (2,1);
    \draw[fill=lightpink, draw=black] (1,1) rectangle (2,2);
     \draw[fill=lightpink, draw=black] (2,1) rectangle (3,2);
     \draw[fill=lightpink, draw=black] (3,1) rectangle (4,2);
      \draw[fill=lightpink, draw=black] (4,1) rectangle (5,2);
\end{tikzpicture}~~
\begin{tikzpicture}[scale=0.6, line cap=round, line join=round]
  % --- parámetros: 2 <= i <= h+1 ---
 \definecolor{lightpink}{rgb}{0.85, 0.92, 0.97}
 \draw[fill=lightpink, draw=black] (0,0) rectangle (1,1);
  \draw[fill=lightpink, draw=black] (1,0) rectangle (2,1);
   \draw[fill=lightpink, draw=black] (1,1) rectangle (2,2);
    \draw[fill=lightpink, draw=black] (2,1) rectangle (3,2);
     \draw[fill=lightpink, draw=black] (2,2) rectangle (3,3);
       \draw[fill=lightpink, draw=black] (3,2) rectangle (4,3);
\end{tikzpicture}
\caption{The six Stanley polyominoes with area 6.}\label{area}
\end{figure}

The coefficient of $z^n$ is the $(n-1)$st term of the  sequence \seqnum{A227309}. This sequence is obtained by summing along the falling diagonals of the array \seqnum{A161492}, whose  entry in the row $n$ and column $m$ counts the number of coin fountains with exactly $n$ coins in the even-numbered rows and $n-m$ coins in the odd-numbered rows of the fountain (see \cite{bal}). Therefore, for $n\geq 1$, there is a bijection between the set of Stanley polyominoes of area $n$ and the set of coin fountains $C$ such that $2e(C)-o(C)=n$, where $e(C)$ and $o(C)$ denote the  number of coins in the even-numbered and odd-numbered rows of $C$, respectively. 

Let $\mathcal{C}_n$ (resp. $\mathcal{S}_n$) denote the set coin fountains with $n\geq 1$ north-east diagonals (resp. the set of Stanley polyominoes with $n\geq 2$ columns), and let $\mathcal{C}=\cup_{n\geq 1} \mathcal{C}_n$, $\mathcal{S}=\cup_{n\geq 2}\mathcal{S}_n$. 

We now define recursively a map $f$ from $\mathcal{C}$ to $\mathcal{S}$ that sends $\mathcal{C}_n$ into $\mathcal{S}_{n+1}$, and we will prove that $f$ sends the statistic $2e(C)-o(C)$ into the area.

\begin{definition} If $C$ consists of a single coin (that is, if $n=1$), then $f(C)$ is the unique Stanley polyomino in $\mathcal{S}_2$ consisting of a single row with two cells.

Now let $C$ be a coin fountain with at least two coins, and let $\texttt{diag}(C)=k\geq 1$ be the number of coins in the first north-east diagonal of $C$. Let $C'$ be the coin fountain obtained from $C$ by deleting the first north-east diagonal.

We distinguish two cases: 
\begin{itemize}
    \item[-] If $k=2\ell+1$ with $\ell\geq 0$, then $f(C)$ is obtained from $f(C')$ by adding below $f(C')$ a row with $\ell+2$ cells, in such a way that its leftmost cell lies one unit to the left of the leftmost cell of the second row.

\item[-] If $k=2\ell$ with $\ell\geq 1$, then $f(C)$ is obtained from $f(C')$ by adding one cell to the beginning of each of the  first $\ell$ rows  of $f(C')$.
\end{itemize}
\end{definition}

The next lemma shows that $f$ is well defined, that is, that the image of any coin fountain under $f$ is a Stanley polyomino. To this end, we introduce two statistics on Stanley polyominoes: $\texttt{firstD}(P)$,  the number of cells in the first north-east diagonal of $P$, and  $\texttt{firstR}(P)$, the number of cells in the first row of $P$. 

\begin{lemma} Let $C$ be a coin fountain with $d$ north-east diagonals, and let $k=\texttt{diag}(C)$ be the number of coins in its first north-east diagonal. Then the polyomino $P=f(C)$ has $d+1$ columns, and it satisfies the following:
\begin{itemize}
    \item[-] if  $k=2\ell$ with $\ell\geq 1$, then  $\texttt{firstD}(P)=\ell$ and $\texttt{firstR}(P)\geq \ell+2$;
    \item[-]  If $k=2\ell+1$ with $\ell\geq 0$, then $\texttt{firstD}(P)\geq \ell+1$ and $\texttt{firstR}(P)=\ell+2$.
    \end{itemize}
\end{lemma}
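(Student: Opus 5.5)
We prove the lemma by induction on the number $d$ of north-east diagonals of $C$. We prove the column count and the two properties together with the fact that $f(C)$ is a Stanley polyomino, since the recursive step needs all of them for $C'$. The only fact about coin fountains we use is the following. A coin at height $j\ge 2$ on the first north-east diagonal rests on the coin at height $j-1$ of the second diagonal. Hence, if $k=\texttt{diag}(C)$ and $C'$ is obtained by deleting the first diagonal, then $k'=\texttt{diag}(C')\ge k-1$. Also, a fountain with a single diagonal is a single coin, so for $d\ge 2$ the fountain $C'$ is nonempty with $d-1$ diagonals.

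\emph{Base case.} For $d=1$, $C$ is one coin, so $k=1$ and $\ell=0$. Then $P$ is a row of two cells, and $\texttt{firstR}(P)=2=\ell+2$, $\texttt{firstD}(P)=1\ge \ell+1$.

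\emph{Inductive step.} Let $d\ge 2$, put $P'=f(C')$ and $k'=\texttt{diag}(C')$. By induction, $P'$ is a Stanley polyomino with $d$ columns, and it satisfies the conclusion of the lemma for $k'$. We first extract what we need from this hypothesis by comparing $k'$ with $k-1$.

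If $k=2\ell+1$, then $k'\ge 2\ell$. In both subcases of the hypothesis, $k'=2\ell'$ with $\ell'\ge \ell$ or $k'=2\ell'+1$ with $\ell'\ge \ell$, we get
\[
\texttt{firstD}(P')\ge \ell \qquad\text{and}\qquad \texttt{firstR}(P')\ge \ell+2 .
\]
Now let $x_0$ be the abscissa of the leftmost cell of the first row of $P'$. The new bottom row occupies the abscissae $x_0-1,\dots,x_0+\ell$, while the old first row ends at $x_0+\texttt{firstR}(P')-1\ge x_0+\ell+1$. Thus the new row begins strictly to the left and ends strictly to the left of the old first row, and it overlaps it in $\ell+1\ge 1$ cells. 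So $P$ is a Stanley polyomino with exactly one new column, hence $d+1$ columns, and $\texttt{firstR}(P)=\ell+2$. The new first cell sits diagonally below-left of the old first cell, so $\texttt{firstD}(P)=1+\texttt{firstD}(P')\ge \ell+1$.

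If $k=2\ell$, then $k'\ge 2\ell-1$. The subcases $k'=2\ell'$ with $\ell'\ge\ell$ and $k'=2\ell'+1$ with $\ell'\ge \ell-1$ both give
\[
\texttt{firstD}(P')\ge \ell \qquad\text{and}\qquad \texttt{firstR}(P')\ge \ell+1 .
\]
In particular $P'$ has at least $\ell$ rows, so the operation is defined. We shift the left ends of rows $1,\dots,\ell$ one unit to the left and keep all right ends. This preserves the strict inequalities among the right ends. It also preserves those among the left ends: within rows $1,\dots,\ell$ the gaps are unchanged, and row $\ell+1$ now starts two units to the right of row $\ell$. No overlap decreases, so $P$ is a Stanley polyomino. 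Only the first row reaches a new leftmost abscissa, so $P$ has $d+1$ columns, and $\texttt{firstR}(P)=\texttt{firstR}(P')+1\ge \ell+2$. Since $\texttt{firstD}(P')\ge \ell$, rows $1,\dots,\ell$ of $P'$ start at consecutive abscissae, and they still do after the shift. Row $\ell+1$, if it exists, now starts two units to the right of row $\ell$, so the diagonal stops exactly at row $\ell$ and $\texttt{firstD}(P)=\ell$.

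The main obstacle is the bookkeeping that turns $k'\ge k-1$ into the two displayed inequalities on $P'$. It requires checking all four parity combinations of $k$ and $k'$. The subtle one is $k=2\ell$ with $k'=2\ell-1$: there the hypothesis gives only $\texttt{firstR}(P')=\ell+1$, and this is exactly the bound needed.
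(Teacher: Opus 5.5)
Your proof is correct and follows essentially the same route as the paper: induction on $d$, the observation $k'\ge k-1$, and a parity case analysis on $k$ and $k'$. You merge the two $k'$-subcases into single bounds on $P'$ and verify the Stanley property and the column count more explicitly than the paper, which is harmless, and the base case $d=1$ suffices. One small slip: in the case $k=2\ell$, after the shift row $\ell+1$ starts \emph{at least} two units to the right of row $\ell$, not exactly two (exactly two only when $\texttt{firstD}(P')>\ell$), and the weaker statement is all your argument needs.
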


\begin{proof} We proceed by induction on the number $d$ of north-east diagonals of $C$. 

For the initial cases $d=1$ and $d=2$, the claim is verified directly from the examples  $C_1$, $C_2$, and $C_3$ shown below. For instance,   $\texttt{diag}(C_3)=2$, so $\ell=1$, and indeed  $\texttt{firstD}(f(C_3))=1=\ell$ and $\texttt{firstR}(f(C_3))=3\geq \ell+2$.

\begin{figure}[H]
 $C_1=$\scalebox{0.3}{\begin{tikzpicture}
% Cercles du bas
\draw (1,0) circle (1);
\end{tikzpicture}}
$f(C_1)=$  \scalebox{0.6}{\begin{tikzpicture}\definecolor{lightpink}{rgb}{0.85, 0.92, 0.97}
 \draw[fill=lightpink, draw=black] (0,0) rectangle (1,1);
  \draw[fill=lightpink, draw=black] (1,0) rectangle (2,1);
  \end{tikzpicture}}\\[5pt]
 $C_2=$ \scalebox{0.3}{\begin{tikzpicture}
% Cercles du bas
\draw (1,0) circle (1);\draw (3,0) circle (1);
\end{tikzpicture}}
 $f(C_2)=$ \scalebox{0.6}{\begin{tikzpicture}\definecolor{lightpink}{rgb}{0.85, 0.92, 0.97}
 \draw[fill=lightpink, draw=black] (0,0) rectangle (1,1);
  \draw[fill=lightpink, draw=black] (1,0) rectangle (2,1);
    \draw[fill=lightpink, draw=black] (1,1) rectangle (2,2);
    \draw[fill=lightpink, draw=black] (2,1) rectangle (3,2);
  \end{tikzpicture}}\qquad
$C_3=$   \scalebox{0.3}{\begin{tikzpicture}
% Cercles du bas
\draw (1,0) circle (1);\draw (3,0) circle (1);
\draw (2,{sqrt(3)}) circle (1);
\end{tikzpicture}}
$f(C_3)=$ \scalebox{0.6}{\begin{tikzpicture}\definecolor{lightpink}{rgb}{0.85, 0.92, 0.97}
 \draw[fill=lightpink, draw=black] (0,0) rectangle (1,1);
  \draw[fill=lightpink, draw=black] (1,0) rectangle (2,1);
    \draw[fill=lightpink, draw=black] (2,0) rectangle (3,1);
  \end{tikzpicture}}
\end{figure}

 %Now, we assume the statements hold for $n\leq d-1$ and we prove it for $n=d$. Let $C$ be a coin fountain with $d$ diagonals, i.e.  $\texttt{diag}(C)=k$.  Let $C'$ be the coin fountains after deleting the first diagonal of $C$, and let $k'$ be the number of coins in the leftmost diagonal of $C'$. Applying the recurrence hypothesis, we assume that $P'=f(C')$ satisfies the statements. 
%\jose{I think that $\texttt{diag}(C)=k$ in the above paragraph, what about the following version: }

 Now assume that the statement holds for all coin fountains with at most $d-1$ north-east diagonals, and let $C$ be a coin fountain with $d$ north-east diagonals. Let $k=\texttt{diag}(C)$, let $C'$ be the coin fountain obtained from $C$ by deleting its first north-east diagonal, and let $k'=\texttt{diag}(C')$. Applying the induction hypothesis, we assume that $P'=f(C')$ satisfies the statements.

 We distinguish two cases.
 \begin{itemize}
     \item[-] Suppose that the first diagonal of $C$ contains $k=2\ell\geq 2$ coins.  If $k'$ is odd, then $k'\geq 2\ell-1=2(\ell-1)+1$, and  by the induction hypothesis applied to  $C'$ we have  
     \[
\texttt{firstD}(P')\geq \ell \quad \text{ and } \quad 
\texttt{firstR}(P')=(\ell-1)+2=\ell+1.
\]
Hence, we can add $\ell$ cells at the beginning of the first $\ell$ rows of $C'$ in order to obtain the Stanley polyomino $f(C)$, and it yields
\[
\texttt{firstD}(f(C))=\ell\quad \text{ and } \quad 
\texttt{firstR}(f(C))\ge \ell+2,
\]
which satisfies the statement.

If $k'$ is even, then $k'\geq 2\ell$, and the induction hypothesis gives  \[\texttt{firstD}(P')\geq\ell \quad \text{ and } \quad  \texttt{firstR}(P')\geq\ell+2.\] 
Again, we can add $\ell$ cells at the beginning of the first $\ell$ rows of $C'$ in order to obtain the  Stanley polyomino $f(C)$, and we have 
\[\texttt{firstD}(P)=\ell\quad \text{ and } \quad  \texttt{firstR}(P)\geq\ell+3,\] which satisfies the statements. See the left part of Figure~\ref{case1} for an illustration of this case.
          
     \item[-]  Suppose that the first diagonal of $C$ contains $k=2\ell+1\geq 1$ coins.  If $k'$ is odd, then  $k'\geq 2\ell+1$, and the induction hypothesis gives \[\texttt{firstD}(P')\geq \ell+1 \quad \text{ and } \quad  \texttt{firstR}(P')\geq\ell+2.\] Hence, we can add a row of $\ell+2$ cells below $C'$ (such that this row starts one cell before the first row  of $P'$) in order to obtain the Stanley polyomino $f(C)$, and we have \[\texttt{firstD}(P)\geq\ell+2 \quad \text{ and } \quad  \texttt{firstR}(P)=\ell+2,\] which satisfies the statements. 
     
     If $k'$ is even, then  $k'\geq 2\ell$, and the induction hypothesis gives \[\texttt{firstD}(P')\geq\ell \quad \text{ and } \quad  \texttt{firstR}(P')\geq \ell+2.\] Hence, we can add $\ell+2$ cells below $C'$ (such that this row starts one cell before the first row  of $P'$) in order to obtain the Stanley polyomino $f(C)$, and we have \[\texttt{firstD}(P)\geq \ell+1\quad \text{ and } \quad  \texttt{firstR}(P)=\ell+2,\] which satisfies the statements. See the right part of Figure~\ref{case1} for an illustration of this case.
 \end{itemize}

\begin{figure}[ht!]
\begin{tikzpicture}[scale=0.5, line cap=round, line join=round]
  % --- parámetros: 2 <= i <= h+1 ---
  \definecolor{lightrose}{rgb}{1.0, 0.8, 0.9}
 \definecolor{lightpink}{rgb}{0.85, 0.92, 0.97}
 \draw[fill=lightrose, draw=black] (0,0) rectangle (1,1);
  \draw[fill=lightpink, draw=black] (1,0) rectangle (2,1);
   \draw[fill=lightpink, draw=black] (2,0) rectangle (3,1);
    \draw[fill=lightpink, draw=black] (3,0) rectangle (4,1);
  \draw[fill=lightpink, draw=black] (4,0) rectangle (5,1);
   \draw[fill=lightpink, draw=black] (5,0) rectangle (6,1);
   \draw[fill=lightpink, draw=black] (6,0) rectangle (7,1);
   \draw[dashed, draw=black] (7,0) rectangle (8,1);
   
    \draw[fill=lightrose, draw=black] (1,1) rectangle (2,2);
    \draw[fill=lightrose, draw=black] (2,2) rectangle (3,3);
    \draw[fill=lightrose, draw=black] (3,3) rectangle (4,4);
    \draw[fill=lightrose, draw=black] (4,4) rectangle (5,5);
     \draw[fill=lightpink,draw=black] (5,4) rectangle (6,5);

      \draw[fill=lightpink, draw=black] (2,1) rectangle (3,2);
    \draw[fill=lightpink, draw=black] (3,2) rectangle (4,3);
    \draw[fill=lightpink, draw=black] (4,3) rectangle (5,4);
    \draw[fill=lightpink, draw=black] (5,4) rectangle (6,5);
     \draw[decorate, decoration={brace, amplitude=5pt}, yshift=0pt]
        (-0.5,1) -- (3.8,5.5) ;
         \draw[decorate, decoration={brace, amplitude=5pt,mirror}, yshift=0pt]
        (0,-0.3) -- (7,-0.3) ;
        \node at (1,4) {$=\ell$};
         \node at (3.3,-1) {$\geq \ell+2$};
\end{tikzpicture}\qquad\qquad\qquad
\begin{tikzpicture}[scale=0.5, line cap=round, line join=round]
  % --- parámetros: 2 <= i <= h+1 ---
  \definecolor{lightrose}{rgb}{1.0, 0.8, 0.9}
 \definecolor{lightpink}{rgb}{0.85, 0.92, 0.97}
 \draw[fill=lightrose, draw=black] (0,0) rectangle (1,1);
  \draw[fill=lightrose, draw=black] (1,0) rectangle (2,1);
   \draw[fill=lightrose, draw=black] (2,0) rectangle (3,1);
    \draw[fill=lightrose, draw=black] (3,0) rectangle (4,1);
  \draw[fill=lightrose, draw=black] (4,0) rectangle (5,1);
   \draw[fill=lightrose, draw=black] (5,0) rectangle (6,1);
   \draw[fill=lightrose, draw=black] (6,0) rectangle (7,1);

    \draw[fill=lightpink, draw=black] (3,1) rectangle (4,2);
  \draw[fill=lightpink, draw=black] (4,1) rectangle (5,2);
   \draw[fill=lightpink, draw=black] (5,1) rectangle (6,2);
   \draw[fill=lightpink, draw=black] (6,1) rectangle (7,2);
   \draw[dashed, draw=black] (7,1) rectangle (8,2);
   
    \draw[fill=lightpink, draw=black] (1,1) rectangle (2,2);
    \draw[fill=lightpink, draw=black] (2,2) rectangle (3,3);
    \draw[fill=lightpink, draw=black] (3,3) rectangle (4,4);
    \draw[fill=lightpink, draw=black] (4,4) rectangle (5,5);
     \draw[fill=lightpink,draw=black] (5,4) rectangle (6,5);
\draw[fill=lightpink,draw=black] (5,5) rectangle (6,6);
\draw[fill=lightpink,draw=black] (6,5) rectangle (7,6);
\draw[dashed,draw=black] (6,6) rectangle (7,7);

      \draw[fill=lightpink, draw=black] (2,1) rectangle (3,2);
    \draw[fill=lightpink, draw=black] (3,2) rectangle (4,3);
    \draw[fill=lightpink, draw=black] (4,3) rectangle (5,4);
    \draw[fill=lightpink, draw=black] (5,4) rectangle (6,5);
     \draw[decorate, decoration={brace, amplitude=5pt}, yshift=0pt]
        (-0.5,1) -- (4.8,6.5) ;
         \draw[decorate, decoration={brace, amplitude=5pt,mirror}, yshift=0pt]
        (0,-0.3) -- (7,-0.3) ;
        \node at (0.8,4) {$\geq \ell+1$};
         \node at (3.3,-1) {$= \ell+2$};
\end{tikzpicture}
\caption{The two cases in the recursive construction of $f$.}\label{case1}
\end{figure}
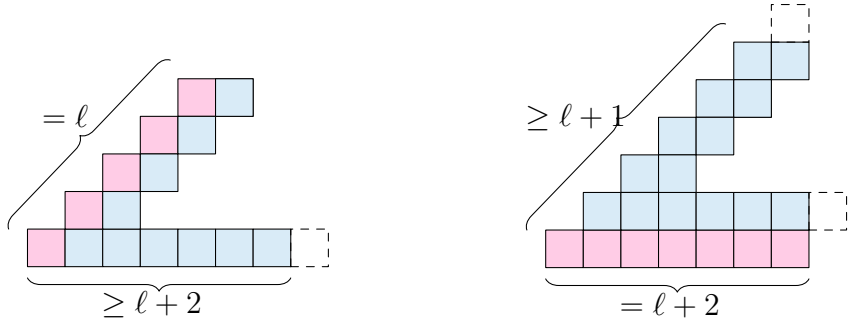
Finally, an induction on the number $d$ of diagonals  completes the proof.
\end{proof}

The previous lemma shows that $f$ is a well-defined map from $\mathcal{C}$ to $\mathcal{S}$, and that $f(C)$ has $d+1$ columns whenever $C$ is a coin fountain with $d$ diagonals. A simple induction shows that $f$ is injective: indeed, there do not exist integers $\ell_1$ and $\ell_2$ such that  both 
\[
\texttt{firstD}(P)=\ell_1,\qquad \texttt{firstR}(P)\ge \ell_1+2,
\]
and
\[
\texttt{firstD}(P)\ge \ell_2+1,\qquad \texttt{firstR}(P)=\ell_2+2\]
hold simultaneously. Since $\mathcal{C}_n$ and $\mathcal{S}_{n+1}$ have the same cardinality, it follows that $f$ is a bijection.  Finally, by the definition of $f$ and a  straightforward induction on the number of diagonals, we  have  $2e(C)-o(C)=\area(f(C))$. This proves the following theorem.

\begin{theorem} The map $f$ from $\mathcal{C}$ to $\mathcal{S}$ is a bijection such that, for every $C\in\mathcal{C}_n$, we have $f(C)\in\mathcal{S}_{n+1}$ and  $2e(C)-o(C)=\area(f(C)).$
\end{theorem}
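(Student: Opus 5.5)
Three things have to be shown. First, $f$ is well defined and sends $\mathcal{C}_n$ into $\mathcal{S}_{n+1}$; this is exactly the preceding lemma. Second, $f$ is a bijection. Third, area is transported to $2e(C)-o(C)$. Everything will be done by induction on the number $d$ of north-east diagonals, following the recursive definition of $f$.

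\textbf{Bijectivity.} I would prove injectivity by induction, and for this the lemma's invariants are the key. They let one read off from $P=f(C)$ which case of the definition was used last, and with which $\ell$.
\begin{itemize}
\item If $\texttt{firstR}(P)=\ell+2$ and $\texttt{firstD}(P)\ge \ell+1$, then the last step added a bottom row. The parameter $k=2\ell+1$ is recovered, and $f(C')$ is recovered by deleting the first row.
\item If $\texttt{firstD}(P)=\ell$ and $\texttt{firstR}(P)\ge \ell+2$, then the last step added one cell at the start of each of the first $\ell$ rows. Again $k=2\ell$ is recovered, and $f(C')$ by removing those cells.
\end{itemize}
These two cases are exclusive. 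If both held, with $\texttt{firstR}=\ell_2+2\ge \ell_1+2$ and $\texttt{firstD}=\ell_1\ge \ell_2+1$, we would get $\ell_2\ge\ell_1\ge\ell_2+1$, a contradiction.

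So $f(C_1)=f(C_2)$ forces the same first diagonal length $k$ and $f(C_1')=f(C_2')$. By induction $C_1'=C_2'$, hence $C_1=C_2$, because a fountain is determined by its first diagonal length and the remainder.

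Surjectivity then follows from cardinality. Coin fountains with $n$ diagonals (i.e.\ $n$ bottom coins in the convention used here) are counted by $C_n$. Stanley polyominoes with $n+1$ columns are also counted by $C_n$, by the bijection $\phi$ / the column theorem above.

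\textbf{Area.} Set $w(C)=2e(C)-o(C)$. The plan is to show that $w$ and $\area\circ f$ satisfy the same recursion, and I expect this to be the main obstacle.

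Deleting the first NE diagonal, with $k$ coins, removes one coin from each of rows $1,\dots,k$. These rows are alternately odd and even, starting with odd. Hence:
\begin{itemize}
\item for $k=2\ell$, the deletion removes $\ell$ even and $\ell$ odd coins, so $w(C)=w(C')+\ell$;
\item for $k=2\ell+1$, it removes $\ell$ even and $\ell+1$ odd coins, so $w(C)=w(C')+\ell-(\ell+1)=w(C')-1$.
\end{itemize}

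The even case matches $f$ exactly, since adding one cell to each of the first $\ell$ rows adds $\ell$ cells of area.

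The odd case is where care is needed. Adding a row of $\ell+2$ cells increases the area by $\ell+2$, not by $-1$. So naive bookkeeping fails, and the first thing to check is the convention: which row of the fountain is the first NE diagonal's top, and whether "rows" are indexed from the bottom. If the plain recursion does not match, I would instead track a refined invariant. For instance, prove simultaneously that $\area(f(C))=w(C)$ together with a relation between $\texttt{firstR}$ and the top height of the first diagonal, adjusting with the $+1$ column shift.

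As a fallback, I would verify numerically that the area distribution matches the coefficients of \seqnum{A227309} for small $d$ (e.g.\ $C_1,C_2,C_3$: $w=2\cdot0-1$?). This calibration will fix the correct reading of $e$ and $o$ (likely the base row counted as row $0$, i.e.\ even). With base row $0$ even, deleting $k$ coins starting on an even row gives $w(C)=w(C')+(\ell+2)$ or similar for $k=2\ell+1$ after including the new column contribution. The base case $w(C_1)=2=\area(f(C_1))$ then holds, and the induction closes.
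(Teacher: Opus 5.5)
Your well-definedness and bijectivity argument is the paper's. The lemma gives $f(\mathcal{C}_n)\subseteq\mathcal{S}_{n+1}$. The two mutually exclusive patterns for $\texttt{firstD}$ and $\texttt{firstR}$ tell you which rule was applied last and with which $\ell$; you spell out the recovery of $f(C')$ more explicitly than the paper does, which is fine. Surjectivity follows because both sets are counted by the Catalan number $C_n$.

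The area statement is also meant to be the paper's ``straightforward induction on the number of diagonals,'' but your write-up never establishes it. Your main computation numbers the bottom row as odd, and it also drops the factor $2$ on $e$: in that convention the odd case gives $2\ell-(\ell+1)=\ell-1$, not $-1$. Your fallback is then left as ``$w(C')+(\ell+2)$ or similar, after including the new column contribution.'' So the key recursion is guessed rather than derived.

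The fix is short and needs no refined invariant.
\begin{itemize}
\item Rows must be numbered from $0$ at the bottom, so the bottom row is even. The base case forces this: $C_1$ is a single coin and $\area(f(C_1))=2=2\cdot 1-0$. The paper's large example confirms it: its rows have $9,8,6,3,1$ coins, giving $2\cdot 16-11=21$.
\item The first north-east diagonal with $k$ coins contains exactly one coin in each of rows $0,1,\dots,k-1$. Removing it leaves a fountain $C'$, because a coin off that diagonal never rests on a coin of it.
\item For $k=2\ell$ you remove $\ell$ even and $\ell$ odd coins, so $w(C)=w(C')+\ell$.
\item For $k=2\ell+1$ you remove $\ell+1$ even and $\ell$ odd coins, so $w(C)=w(C')+2(\ell+1)-\ell=w(C')+\ell+2$.
\end{itemize}
These are exactly the numbers of cells that $f$ adds in the two cases, so the induction closes directly, with no column correction. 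In the bottom-odd convention one instead gets $\area(f(C))-w(C)=3\,\row(f(C))$. That is why your first attempt showed a discrepancy that no local adjustment of the recursion would remove.
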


We refer to  Figure~\ref{largeex} for an illustration of the bijection $f$ on a concrete example.

\begin{figure}[H]
    \scalebox{0.3}{\begin{tikzpicture}
% Cercles du bas
\draw (1,0) circle (1);
\draw (2,{sqrt(3)}) circle (1);
\draw (3,{2*sqrt(3)}) circle (1);
\draw (4,{3*sqrt(3)}) circle (1);
\draw (5,{4*sqrt(3)}) circle (1);

\draw (3,0) circle (1);
\draw (4,{sqrt(3)}) circle (1);
\draw (5,{2*sqrt(3)}) circle (1);
\draw (6,{3*sqrt(3)}) circle (1);

\draw (5,0) circle (1);
\draw (6,{sqrt(3)}) circle (1);
\draw (7,{2*sqrt(3)}) circle (1);

\draw (7,0) circle (1);
\draw (8,{sqrt(3)}) circle (1);
\draw (9,{2*sqrt(3)}) circle (1);

\draw (9,0) circle (1);
\draw (10,{sqrt(3)}) circle (1);
\draw (11,{2*sqrt(3)}) circle (1);
\draw (12,{3*sqrt(3)}) circle (1);

\draw (11,0) circle (1);
\draw (12,{sqrt(3)}) circle (1);
\draw (13,{2*sqrt(3)}) circle (1);

\draw (13,0) circle (1);
\draw (14,{sqrt(3)}) circle (1);

\draw (15,0) circle (1);
\draw (16,{sqrt(3)}) circle (1);

\draw (17,0) circle (1);
\end{tikzpicture}}
\qquad \begin{tikzpicture}[scale=0.6, line cap=round, line join=round]
  % --- parámetros: 2 <= i <= h+1 ---
 \definecolor{lightpink}{rgb}{0.85, 0.92, 0.97}
 \draw[fill=lightpink, draw=black] (0,0) rectangle (1,1);
  \draw[fill=lightpink, draw=black] (1,0) rectangle (2,1);
   \draw[fill=lightpink, draw=black] (2,0) rectangle (3,1);
    \draw[fill=lightpink, draw=black] (3,0) rectangle (4,1);
  
    \draw[fill=lightpink, draw=black] (1,1) rectangle (2,2);
    \draw[fill=lightpink, draw=black] (2,1) rectangle (3,2);
    \draw[fill=lightpink, draw=black] (3,1) rectangle (4,2);
    \draw[fill=lightpink, draw=black] (4,1) rectangle (5,2);

    \draw[fill=lightpink, draw=black] (2,2) rectangle (3,3);
    \draw[fill=lightpink, draw=black] (3,2) rectangle (4,3);
    \draw[fill=lightpink, draw=black] (4,2) rectangle (5,3);
    \draw[fill=lightpink, draw=black] (5,2) rectangle (6,3);

     \draw[fill=lightpink, draw=black] (4,3) rectangle (5,4);
    \draw[fill=lightpink, draw=black] (5,3) rectangle (6,4);
    \draw[fill=lightpink, draw=black] (6,3) rectangle (7,4);
    \draw[fill=lightpink, draw=black] (7,3) rectangle (8,4);

      \draw[fill=lightpink, draw=black] (5,4) rectangle (6,5);
    \draw[fill=lightpink, draw=black] (6,4) rectangle (7,5);
    \draw[fill=lightpink, draw=black] (7,4) rectangle (8,5);
    \draw[fill=lightpink, draw=black] (8,4) rectangle (9,5);
     \draw[fill=lightpink, draw=black] (9,4) rectangle (10,5);
\end{tikzpicture}
\caption{A coin fountain $C$ such that $2e(C)-o(C)=21$, together with its image under $f$.}\label{largeex}
\end{figure}
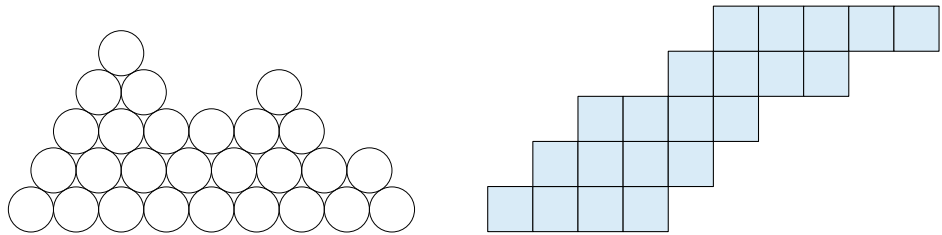

As a byproduct, this bijection allows  us to address an open problem posed by Bala (see \cite{bal}), namely the construction of an explicit bijection between coin fountains and skew Ferrers diagrams \cite{Delest92} (equivalently, parallelogram polyominoes).

Let $\mathcal{P}_{n}$ be the set of parallelogram polyominoes of area $n$ and let $\mathcal{C}_n$ be the set of coin fountains with $n$ coins in the even-numbered rows. We define the map $\psi$ from  $\mathcal{P}_{n}$ to $\mathcal{C}_n$ as follows.
For $P\in\mathcal{P}_{n}$, we set 
\[\psi(P)=f^{-1}(\phi^{-1}(h(P))),\]
where $\phi$ and $f$ are the maps defined in this paper, and  $h$ is the map from parallelogram polyominoes to Dyck paths originally introduced in \cite{Delest} and later used in   \cite{Bousquet92}. See Figure~\ref{bbi} for an illustration of the map $\psi$.

\begin{theorem}\label{beubij}  The map $\psi$ is a bijection from the set of parallelogram polyominoes of area $n$ with $k$ columns to the set of coin fountains with $n$ coins in the even-numbered rows and $n-k$ coins in the odd-numbered rows.
\end{theorem}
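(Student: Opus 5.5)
The map $\psi$ is a composition of three bijections, $h$, $\phi^{-1}$ and $f^{-1}$, so it is automatically a bijection between the full classes. The real content is tracking two statistics through each map: area and number of columns on the parallelogram side, and the pair $(e(C),o(C))$ on the coin fountain side. The plan is to identify, at each stage, which statistics correspond, and then combine the relations.

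\emph{Step 1 (the map $h$).} I will use the known property of the Delest--Viennot bijection $h$ from \cite{Delest}, as exploited in \cite{Bousquet92}. It sends a parallelogram polyomino with $k$ columns and area $n$ to a Dyck path with exactly $k$ peaks whose peak heights sum to $n$; the semilength is the semiperimeter minus one, but that statistic plays no role here. I expect this to be the step needing the most care: the precise normalisation of $h$ in \cite{Delest} has to be checked, and columns and rows may have to be exchanged, which is harmless by symmetry of parallelogram polyominoes. I would verify it directly on the column-by-column description of $h$.

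\emph{Step 2 (the map $\phi^{-1}$).} By Table~\ref{tab:placeholder}, $\phi^{-1}(h(P))$ is a Stanley polyomino $S$ with $\row(S)=k$ (the number of peaks) and
\[
\area(S)=n+k
\]
(the sum of peak heights plus the number of peaks).

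\emph{Step 3 (the map $f^{-1}$).} The previous theorem already gives $2e(C)-o(C)=\area(f(C))$ for $C=f^{-1}(S)$. To separate $e$ and $o$, I will prove a companion identity by induction on the number of north-east diagonals:
\[
e(C)-o(C)=\row(f(C)).
\]
Number the rows of the fountain from the bottom, starting at $0$, which is the convention under which the base case holds: a single coin gives $e-o=1$, and $f(C_1)$ has one row. A first diagonal with $k$ coins places exactly one coin in each of the bottom $k$ rows.
\begin{itemize}
\item[] If $k=2\ell+1$, the first diagonal contributes $\ell+1$ coins to even rows and $\ell$ to odd rows. So $e-o$ increases by $1$, and $2e-o$ increases by $\ell+2$. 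This matches the definition of $f$, which adds exactly one new row of $\ell+2$ cells.
\item[] If $k=2\ell$, the first diagonal contributes $\ell$ coins to each parity. So $e-o$ is unchanged and $2e-o$ increases by $\ell$. This matches $f$, which adds $\ell$ cells and no row.
\end{itemize}
This simultaneously re-confirms the area identity.

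\emph{Conclusion.} Combining the three steps, for $P$ of area $n$ with $k$ columns and $C=\psi(P)$, we get
\[
2e(C)-o(C)=n+k \qquad\text{and}\qquad e(C)-o(C)=k .
\]
Solving gives $e(C)=n$ and $o(C)=n-k$. Conversely, every coin fountain with these parameters is reached, since each of the three maps is a bijection preserving the stated statistics in both directions. Hence $\psi$ restricts to a bijection between the two sets in Theorem~\ref{beubij}.
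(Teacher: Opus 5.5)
Your proposal is correct and follows the paper's route. It composes $h$, $\phi^{-1}$, $f^{-1}$, tracks (columns, area) $\to$ (peaks, sum of peak heights) $\to$ $(\row,\area-\row)$, and then solves $2e-o=n+k$, $e-o=k$. Your inductive proof of $e(C)-o(C)=\row(f(C))$, with rows numbered from $0$ at the bottom, adds something the paper lacks: the paper uses this identity in its final step without proving it.
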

\begin{proof} Let $P$ be a parallelogram polyomino of area $n$  with $k$ columns. By the definition of the map $h$ in \cite{Bousquet92}, we have $$\hpeak(h(P))=\area(P)=n \mbox{\quad and \quad}\nbp(h(P))=\col(P)=k.$$ By the definition of $\phi$, it follows that
$$\area(\phi^{-1}(h(P)))-\row(\phi^{-1}(h(P)))=\area(P)  \mbox{ and }  \row(\phi^{-1}(h(P)))= \col(P).$$ Finally, we have 
$$e(\psi(P))=\area(P)  \mbox{\quad and \quad}  e(\psi(P))-o(\psi(P))=\col(P),$$
which means that 
$$e(\psi(P))=\area(P)=n  \mbox{\quad and \quad}  o(\psi(P))=\area(P)-\col(P)=n-k.$$
\end{proof}

\begin{figure}[ht!]
    \begin{tikzpicture}[scale=0.6, line cap=round, line join=round]
  % --- parámetros: 2 <= i <= h+1 ---
 \definecolor{lightpink}{rgb}{0.85, 0.92, 0.97}
 \draw[fill=lightpink, draw=black] (0,0) rectangle (1,1);
  \draw[fill=lightpink, draw=black] (0,1) rectangle (1,2);
   \draw[fill=lightpink, draw=black] (0,2) rectangle (1,3);
   
   \draw[fill=lightpink, draw=black] (1,0) rectangle (2,1);
  \draw[fill=lightpink, draw=black] (1,1) rectangle (2,2);
   \draw[fill=lightpink, draw=black] (1,2) rectangle (2,3);
   \draw[fill=lightpink, draw=black] (1,3) rectangle (2,4);

   \draw[fill=lightpink, draw=black] (2,2) rectangle (3,3);
   \draw[fill=lightpink, draw=black] (2,3) rectangle (3,4);

   \draw[fill=lightpink, draw=black] (3,2) rectangle (4,3);
  \draw[fill=lightpink, draw=black] (3,3) rectangle (4,4);
   \draw[fill=lightpink, draw=black] (3,4) rectangle (4,5);
   \draw[fill=lightpink, draw=black] (3,5) rectangle (4,6);

    \draw[fill=lightpink, draw=black] (4,3) rectangle (5,4);
  \draw[fill=lightpink, draw=black] (4,4) rectangle (5,5);
   \draw[fill=lightpink, draw=black] (4,5) rectangle (5,6);
   \draw[fill=lightpink, draw=black] (4,6) rectangle (5,7);

    \draw[fill=lightpink, draw=black] (5,5) rectangle (6,6);
   \draw[fill=lightpink, draw=black] (5,6) rectangle (6,7);

    \draw[fill=lightpink, draw=black] (6,5) rectangle (7,6);
   \draw[fill=lightpink, draw=black] (6,6) rectangle (7,7);

    \draw[fill=lightpink, draw=black] (7,5) rectangle (8,6);
   \draw[fill=lightpink, draw=black] (7,6) rectangle (8,7);
   \draw[fill=lightpink, draw=black] (7,7) rectangle (8,8);
\end{tikzpicture}$\overset{h}{\longrightarrow}$
\scalebox{0.6}{
\begin{tikzpicture}
\draw[dashed] (0,0)--(16,0);
\draw[dashed] (0,0)--(0,4);
\draw[line width=0.7mm,red] (0,0)--(1.5,1.5)--(2,1)--(3,2)--(4.5,0.5)--(5,1)--(5.5,0.5)--(7,2)--(8,1)--(9,2)--(10.5,0.5)--(11,1)--(11.5,0.5)--(12,1)--(12.5,0.5)--(13.5,1.5)--(15,0);
\end{tikzpicture}}$\overset{\phi^{-1}}{\longrightarrow}$\\[5pt]
 \begin{tikzpicture}[scale=0.6, line cap=round, line join=round]
  % --- parámetros: 2 <= i <= h+1 ---
 \definecolor{lightpink}{rgb}{0.85, 0.92, 0.97}
 \draw[fill=lightpink, draw=black] (0,0) rectangle (1,1);
  \draw[fill=lightpink, draw=black] (1,0) rectangle (2,1);
   \draw[fill=lightpink, draw=black] (2,0) rectangle (3,1);
\draw[fill=lightpink, draw=black] (3,0) rectangle (4,1);
   
   \draw[fill=lightpink, draw=black] (1,1) rectangle (2,2);
  \draw[fill=lightpink, draw=black] (2,1) rectangle (3,2);
   \draw[fill=lightpink, draw=black] (3,1) rectangle (4,2);
   \draw[fill=lightpink, draw=black] (4,1) rectangle (5,2);
\draw[fill=lightpink, draw=black] (5,1) rectangle (6,2);

   \draw[fill=lightpink, draw=black] (4,2) rectangle (5,3);
   \draw[fill=lightpink, draw=black] (5,2) rectangle (6,3);
   \draw[fill=lightpink, draw=black] (6,2) rectangle (7,3);

   \draw[fill=lightpink, draw=black] (5,3) rectangle (6,4);
   \draw[fill=lightpink, draw=black] (6,3) rectangle (7,4);
\draw[fill=lightpink, draw=black] (7,3) rectangle (8,4);
   \draw[fill=lightpink, draw=black] (8,3) rectangle (9,4);
 \draw[fill=lightpink, draw=black] (9,3) rectangle (10,4);

   \draw[fill=lightpink, draw=black] (7,4) rectangle (8,5);
   \draw[fill=lightpink, draw=black] (8,4) rectangle (9,5);
\draw[fill=lightpink, draw=black] (9,4) rectangle (10,5);
   \draw[fill=lightpink, draw=black] (10,4) rectangle (11,5);
 \draw[fill=lightpink, draw=black] (11,4) rectangle (12,5);

  \draw[fill=lightpink, draw=black] (10,5) rectangle (11,6);
 \draw[fill=lightpink, draw=black] (11,5) rectangle (12,6);
 \draw[fill=lightpink, draw=black] (12,5) rectangle (13,6);

 \draw[fill=lightpink, draw=black] (11,6) rectangle (12,7);
 \draw[fill=lightpink, draw=black] (12,6) rectangle (13,7);
 \draw[fill=lightpink, draw=black] (13,6) rectangle (14,7);

 \draw[fill=lightpink, draw=black] (12,7) rectangle (13,8);
 \draw[fill=lightpink, draw=black] (13,7) rectangle (14,8);
 \draw[fill=lightpink, draw=black] (14,7) rectangle (15,8);
  \draw[fill=lightpink, draw=black] (15,7) rectangle (16,8);
\end{tikzpicture}$\overset{f^{-1}}{\longrightarrow}$\\[5pt]
    \scalebox{0.3}{\begin{tikzpicture}
% Cercles du bas
\draw (1,0) circle (1);
\draw (2,{sqrt(3)}) circle (1);
\draw (3,{2*sqrt(3)}) circle (1);
\draw (4,{3*sqrt(3)}) circle (1);
%\draw (5,{4*sqrt(3)}) circle (1);

\draw (3,0) circle (1);
\draw (4,{sqrt(3)}) circle (1);
\draw (5,{2*sqrt(3)}) circle (1);
%\draw (6,{3*sqrt(3)}) circle (1);

\draw (5,0) circle (1);
\draw (6,{sqrt(3)}) circle (1);
%\draw (7,{2*sqrt(3)}) circle (1);

\draw (7,0) circle (1);
\draw (8,{sqrt(3)}) circle (1);
\draw (9,{2*sqrt(3)}) circle (1);

\draw (9,0) circle (1);
\draw (10,{sqrt(3)}) circle (1);
\draw (11,{2*sqrt(3)}) circle (1);
%\draw (12,{3*sqrt(3)}) circle (1);

\draw (11,0) circle (1);
\draw (12,{sqrt(3)}) circle (1);
%\draw (13,{2*sqrt(3)}) circle (1);

\draw (13,0) circle (1);
\draw (14,{sqrt(3)}) circle (1);
\draw (15,{2*sqrt(3)}) circle (1);
\draw (16,{3*sqrt(3)}) circle (1);

\draw (15,0) circle (1);
\draw (16,{sqrt(3)}) circle (1);
\draw (17,{2*sqrt(3)}) circle (1);

\draw (17,0) circle (1);
\draw (18,{sqrt(3)}) circle (1);

\draw (19,0) circle (1);
\draw (20,{sqrt(3)}) circle (1);
\draw (21,{2*sqrt(3)}) circle (1);

\draw (21,0) circle (1);
\draw (22,{sqrt(3)}) circle (1);
\draw (23,{2*sqrt(3)}) circle (1);

\draw (23,0) circle (1);
\draw (24,{sqrt(3)}) circle (1);
\draw (25,{2*sqrt(3)}) circle (1);

\draw (25,0) circle (1);
\draw (26,{sqrt(3)}) circle (1);

\draw (27,0) circle (1);
\draw (28,{sqrt(3)}) circle (1);

\draw (29,0) circle (1);
\end{tikzpicture}}
\caption{A parallelogram polyomino of area 24 and the construction of its image by  $\psi$.}
\label{bbi}
\end{figure}
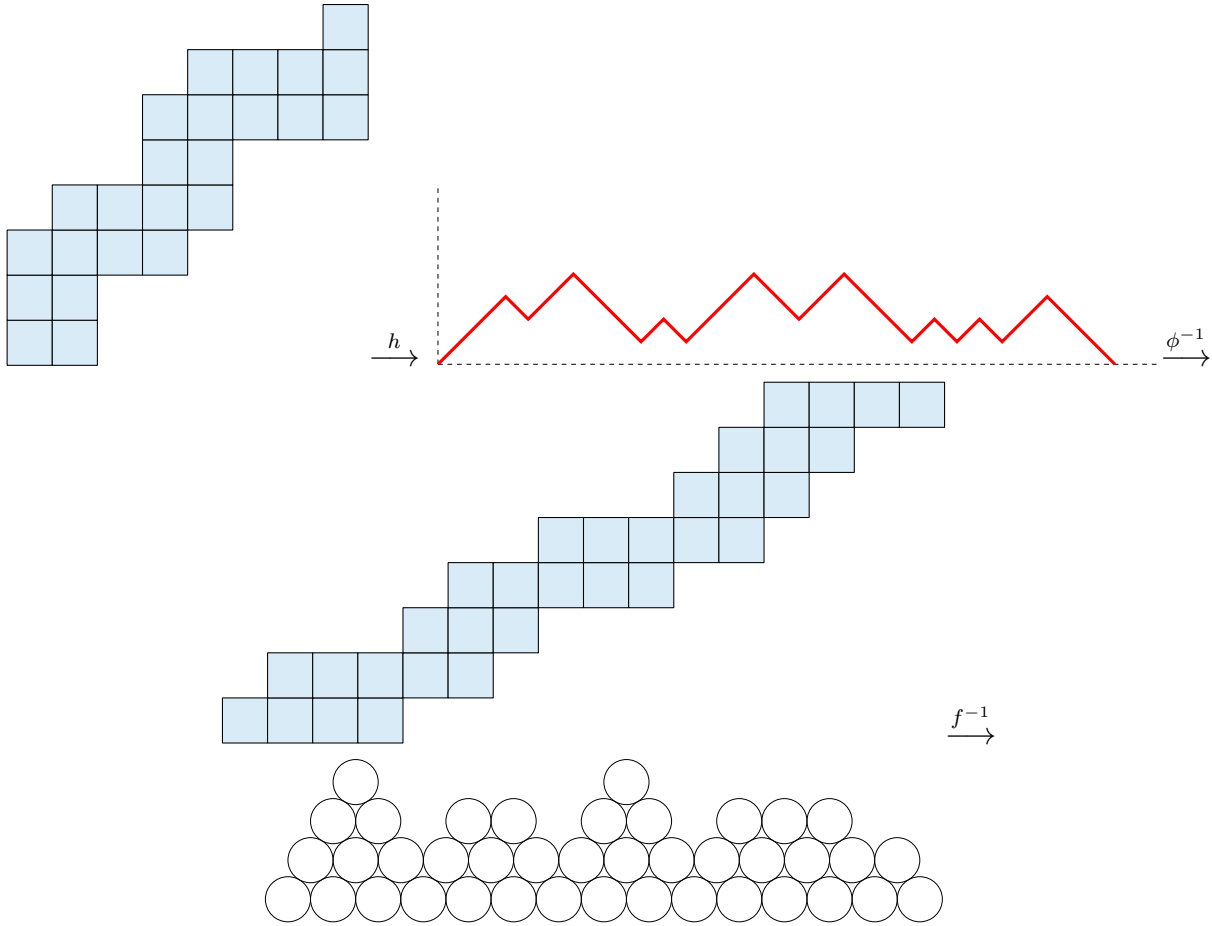

\begin{corollary}\label{coropara}
The number of Stanley polyominoes
with area $n \geq 2$ and $r$ rows is equal to the number of parallelogram polyominoes with area $n-r$ and $r$ columns.  Equivalently, it is equal to the number of coin fountains with $n-r$ coins in the even-numbered rows and $n-2r$ coins in the odd-numbered rows.
\end{corollary}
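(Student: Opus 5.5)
The plan is to deduce the corollary from the chain of bijections already assembled in the proof of Theorem~\ref{beubij}, reading the relations between statistics in the reverse direction. Let $\mathcal{S}_{n,r}$ denote the Stanley polyominoes of area $n$ with $r$ rows, and let $\mathcal{P}_{m,r}$ denote the parallelogram polyominoes of area $m$ with $r$ columns.

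First, we transport statistics through $\phi$ using Table~1. For a Stanley polyomino $S$, the path $\phi(S)$ has $\row(S)$ peaks, and the sum of its peak heights equals $\area(S)-\row(S)$. Next we use the map $h$ of \cite{Delest,Bousquet92}. It is a bijection from parallelogram polyominoes to Dyck paths that sends the area to the sum of peak heights ($\hpeak$) and the number of columns to the number of peaks ($\nbp$). This is exactly what the proof of Theorem~\ref{beubij} uses.

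Composing $h^{-1}\circ\phi$ therefore gives a map that sends $S\in\mathcal{S}_{n,r}$ to a parallelogram polyomino with $r$ columns and area $n-r$. Both $\phi$ and $h$ are bijections onto the full set of nonempty Dyck paths, so this composite restricts to a bijection $\mathcal{S}_{n,r}\to\mathcal{P}_{n-r,r}$, and this proves the first claim.

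The one point I expect to need care is the range of the maps. The bijection $\phi$ sends Stanley polyominoes with at least two columns to nonempty Dyck paths. The single-cell polyomino (area $1$) corresponds to the empty path, and this is excluded by the hypothesis $n\ge 2$. Apart from this, one needs to check that $h$ is onto all nonempty Dyck paths. I would verify both facts against the small coefficients, for instance area $6$ in Figure~\ref{area}.

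For the second claim, we apply Theorem~\ref{beubij} with area $n-r$ and $r$ columns. The map $\psi$ then yields coin fountains with $n-r$ coins in the even-numbered rows and $(n-r)-r=n-2r$ coins in the odd-numbered rows. Composing gives the stated equality of cardinalities.
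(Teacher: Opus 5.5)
Your proposal is correct and follows the paper's proof. You compose $\phi$ with the Delest--Viennot map $h$ (you use $h^{-1}\circ\phi$ where the paper uses the inverse $\phi^{-1}\circ h$), match $\area-\row$ and $\row$ with the sum of peak heights and the number of peaks, and then apply Theorem~\ref{beubij} with area $n-r$ and $r$ columns.

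Your remark that $n\ge 2$ excludes only the single-cell polyomino (the preimage of the empty path) is a correct detail that the paper leaves implicit. No numerical check is needed for the two range facts: $h$ is a bijection onto all nonempty Dyck paths, and $\phi$ restricted to Stanley polyominoes with at least two columns is a bijection onto them as well.
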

\begin{proof}
Let $P$ be a parallelogram polyomino of area $N$ with $k$ columns, and set
$Q=\phi^{-1}(h(P))$. By the proof of Theorem~\ref{beubij}, we have
\[
\area(Q)-\row(Q)=N,
\qquad
\row(Q)=k.
\]
Hence
\[
\area(Q)=N+k,
\qquad
\row(Q)=k.
\]
Therefore the map $\phi^{-1}\circ h$ is a bijection from parallelogram polyominoes of area $N$ with $k$ columns to Stanley polyominoes of area $N+k$ with $k$ rows. Now write $n=N+k$ and $r=k$. Then $N=n-r$, and we obtain the first statement.

For the second statement, apply Theorem~\ref{beubij}: the same parallelogram polyominoes are in
bijection with coin fountains having $N$ coins in the even-numbered rows and $N-k$ coins in the odd-numbered rows. Substituting $N=n-r$ and $k=r$ establishes the claim.
\end{proof}

\section{Continued fraction point of view}

The expression obtained in Theorem~\ref{thmarea} can also be written as a continued fraction by means of the bijection $\phi$ between Stanley polyominoes with $n+1$ columns and Dyck paths of semilength $n$ (see the introduction). For background on the connection between lattice paths and continued fractions, see \cite{Flajolet}. Since $\phi$ sends the area of a Stanley polyomino to the sum of peak heights plus the number of peaks of the corresponding Dyck path, we study Dyck paths according to the three statistics: the number of peaks, marked by $p$, the sum of peak heights, marked by $q$, and the sum of valley heights, marked by $v$.

Let $\mathcal{D}$ denote the set of non-empty Dyck paths. We consider the generating function
\[A(p,q,v)=\sum\limits_{D\in\mathcal{D}}p^{\nbp(D)}q^{\sump(D)}v^{\sumv(D)},\]
where $\nbp(D)$ denotes the number of peaks of $D$,  $\sump(D)$ and  $\sumv(D)$ denote the sum of peak heights and the sum of the valley heights of $D$, respectively.

\begin{theorem} Let $A(p,q,v)$ be the generating function for Dyck paths where $p$ marks the number of peaks, $q$ marks the sum of peak heights, and $v$ marks the sum of valley heights. Equivalently, $A(p,q,v)$ is the generating function for Stanley polyominoes with respect to the number of rows, the area minus the number of rows, and the number of interior points. Then   
   \[A(p,q,v)= -1+\cfrac{v}{
  1+v-pqv-\cfrac{v}{
    1+v-pq^{2}v^2-\cfrac{v}{
      1+v-pq^{3}v^3-\cfrac{v}{
        1+v-\cdots
      }
    }
  }
}.
\]
\end{theorem}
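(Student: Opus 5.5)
We would prove the identity directly on Dyck paths through a height-indexed first-return decomposition, and then rewrite the resulting recursion as the stated continued fraction. The translation to Stanley polyominoes in the statement is immediate from Table~1 via $\phi$. The number of peaks is $\row$. The sum of peak heights is $\area-\row$. The sum of valley heights is $\point$.

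For $h\ge 0$, let $N_h$ be the generating function of nonempty Dyck paths translated to start and end at height $h$. Here a peak whose top is at absolute height $j$ gets weight $pq^{j}$, and a valley at absolute height $j$ gets weight $v^{j}$. Then $A(p,q,v)=N_0$. Such a path is a nonempty sequence of arches $u X d$ based at height $h$. Either $X$ is empty, which is a single peak of weight $pq^{h+1}$, or $X$ is a nonempty path at level $h+1$. Two consecutive arches create a valley at height $h$, of weight $v^h$. Hence, with $B_h=pq^{h+1}+N_{h+1}$,
\[
N_h=\frac{B_h}{1-v^hB_h}.
\]
The valleys strictly inside an arch are accounted for inside $N_{h+1}$.

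Next we normalise. Put $M_h=v^hN_h$ and $W_h=v^hB_h=pq^{h+1}v^h+M_{h+1}/v$. The recursion becomes $M_h=W_h/(1-W_h)$, that is, $1+M_h=1/(1-W_h)$. Substituting $M_{h+1}=1/(1-W_{h+1})-1$ into the formula for $W_h$ and multiplying by $v$ gives
\[
v(1-W_h)=1+v-pq^{h+1}v^{h+1}-\frac{v}{v(1-W_{h+1})}.
\]
Setting $D_{h+1}=v(1-W_h)$, this reads $D_{h+1}=1+v-pq^{h+1}v^{h+1}-v/D_{h+2}$. Moreover $A+1=1+M_0=1/(1-W_0)=v/D_1$. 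Unfolding the recursion yields exactly the displayed continued fraction.

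The main obstacle is justifying the infinite unfolding, i.e.\ convergence in the ring of formal power series. We would first try to handle this by counting paths by their number of peaks. Any path contributing to $N_{h+1}$ contains a factor $p$ and reaches height at least $h+2$. So truncating the fraction at depth $h$ only affects paths of height greater than $h$. Such paths carry $q^{h+1}$ at least, so truncating affects only terms of $q$-degree larger than $h$. Consequently the $h$-th convergents agree with $A$ modulo $q^{h+1}$, which gives convergence in the $q$-adic topology. We would also double-check the small terms, for instance $A=pq+p^2q^2+pq^2+\dots$, against the expansion of the fraction.
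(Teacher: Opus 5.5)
Your algebra is correct, and it is the paper's argument in height-indexed form. Your $M_h=v^hN_h$ equals $A(p(qv)^h,q,v)$, and your arch recursion $1+M_h=1/(1-W_h)$ is the paper's first-return equation $A(p)=-1+v/(v-pqv-A(pqv))$ applied at level $h$.

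The gap is in the convergence paragraph, whose central claim is false as stated. In the true expansion the partial denominators are
\[
D_k=v(1-W_{k-1})=v-pq^kv^k-A(pq^kv^k,q,v),
\]
that is, $v$ times a unit. So they have zero constant term. If you form the ordinary convergents by deleting the tail, i.e.\ stopping at the partial denominator $1+v-pq^nv^n$, then every partial denominator gets constant term $1$.

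Concretely, at $q=0$ the $n$-th ordinary convergent is
\[
-1+\frac{v+\cdots+v^n}{1+v+\cdots+v^n}=-\frac{1-v}{1-v^{n+1}}.
\]
This has constant term $-1$ for every $n$ and tends $v$-adically to $v-1$, whereas $A\equiv 0 \pmod q$. So the assertion that ``the $h$-th convergents agree with $A$ modulo $q^{h+1}$'' fails already modulo $q$. The small-term check you planned would have exposed this.

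Your height argument actually proves something about a different truncation. Setting $N_{h+1}=0$, i.e.\ $M_{h+1}=0$, means replacing the tail $v/D_{h+2}=1+M_{h+1}$ by $1$ (equivalently $D_{h+2}$ by $v$), not deleting it. Deleting the tail would correspond to the meaningless choice $M_{h+1}=-1$.

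With the correct convention the argument works:
\begin{itemize}
\item each approximant is a legitimate power series, because every denominator is $v$ times a unit;
\item each approximant is exactly the generating function of paths of height at most $h+1$;
\item hence it agrees with $A$ modulo $q^{h+2}$, which gives $q$-adic convergence.
\end{itemize}

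You should therefore say explicitly that the infinite fraction is to be read in this sense, or as the formal unfolding of the functional equation with the explicit tails $D_k$, and not as a limit of ordinary convergents. The paper's ``iterating yields the continued fraction'' leaves this same point implicit.
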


\begin{proof} By the bijection $\phi$ and the remark preceding the theorem, it is sufficient to work with Dyck paths. Let $D$ be a nonempty Dyck path, and write its first-return decomposition as  $D=uQdR$, where $Q$ and $R$ are Dyck paths, possibly empty.

If $Q$ is empty, then $D=udR$.   The initial peak contributes a factor $pq$, and the suffix $R$ contributes a factor $1+A(p,q,v)$. Hence the contribution of this case is
\[
pq\bigl(1+A(p,q,v)\bigr).
\]
 Now assume that $Q$ is nonempty. In the decomposition each peak of $Q$ increases its height by $1$, so the sum of peak heights increases by $\nbp(Q)$. Similarly, each valley of $Q$ increases its height by $1$, so the sum of valley heights increases by the number of valleys of $Q$. Since $Q$ is a nonempty Dyck path, the number of valleys of $Q$ is $\nbp(Q)-1$. Hence the weight of the elevated copy of $Q$ is
\[
p^{\nbp(Q)}q^{\sump(Q)+\nbp(Q)}v^{\sumv(Q)+\nbp(Q)-1}
=
\frac1v\,(pqv)^{\nbp(Q)}q^{\sump(Q)}v^{\sumv(Q)}.
\]
Therefore the contribution of this case is
\[
\frac1v\,A(pqv,q,v)\bigl(1+A(p,q,v)\bigr).
\]
Combining the two cases, we obtain \[A(p,q,v)=pq+pqA(p,q,v)+\frac{1}{v}A(pqv,q,v)(A(p,q,v)+1),\]
which can be written 
\[A(p,q,v)=-1+\frac{v}{v-pqv-A(pqv,q,v)}.\]
Substituting this into the recursion and iterating yields the continued fraction.
 \end{proof}

The first few terms of the series expansion of $A(p,q,v)$ in powers of $q$ are 
\begin{multline*}  
qp+p \left( p+1 \right) {q}^{2}+  p(p^2+2p+1)q^3 + \bm{\left(p + 3 p^2 + 3 p^3 + p^4 + p^2 v\right) {q}^{4}}+\\ p \left( {p}^{
4}+4\,{p}^{3}+2\,{p}^{2}v+6\,{p}^{2}+2\,pv+4\,p+1 \right) {q}^{5}+\\ p
 \left( {p}^{5}+5\,{p}^{4}+3\,{p}^{3}v+{p}^{2}{v}^{2}+10\,{p}^{3}+6\,{
p}^{2}v+p{v}^{2}+10\,{p}^{2}+3\,pv+5\,p+1 \right) {q}^{6}+ O(q^7).
\end{multline*}

The bold coefficient of $q^4$ corresponds to the Dyck paths shown in Figure~\ref{dycks}.

\begin{figure}[ht!]
 \scalebox{0.6}{\begin{tikzpicture}
\draw[dashed] (0,0)--(4.5,0);
\draw[dashed] (0,0)--(0,1.5);
\draw[line width=0.7mm,red] (0,0)--(0.5,0.5)--(1,0)--(1.5,0.5)--(2,0)--(2.5,0.5)--(3,0)--(3.5,0.5)--(4,0);
\end{tikzpicture} \quad
\begin{tikzpicture}
\draw[dashed] (0,0)--(4.5,0);
\draw[dashed] (0,0)--(0,1.5);
\draw[line width=0.7mm,red] (0,0)--(0.5,0.5)--(1,0)--(1.5,0.5)--(2,0)--(3,1)--(4,0);
\end{tikzpicture}
\quad
\begin{tikzpicture}
\draw[dashed] (0,0)--(4.5,0);
\draw[dashed] (0,0)--(0,1.5);
\draw[line width=0.7mm,red] (0,0)--(0.5,0.5)--(1,0)--(2,1)--(3,0)--(3.5,0.5)--(4,0);
\end{tikzpicture}
\quad 
\begin{tikzpicture}
\draw[dashed] (0,0)--(4.5,0);
\draw[dashed] (0,0)--(0,1.5);
\draw[line width=0.7mm,red] (0,0)--(1,1)--(2,0)--(2.5,0.5)--(3,0)--(3.5,0.5)--(4,0);
\end{tikzpicture}}\\[5pt]
\scalebox{0.6}{
\begin{tikzpicture}
\draw[dashed] (0,0)--(4.5,0);
\draw[dashed] (0,0)--(0,1.5);
\draw[line width=0.7mm,red] (0,0)--(1,1)--(2,0)--(3,1)--(4,0);
\end{tikzpicture}\qquad
\begin{tikzpicture}
\draw[dashed] (0,0)--(4.5,0);
\draw[dashed] (0,0)--(0,1.5);
\draw[line width=0.7mm,red] (0,0)--(1.5,1.5)--(3,0)--(3.5,0.5)--(4,0);
\end{tikzpicture}
\quad \begin{tikzpicture}
\draw[dashed] (0,0)--(4.5,0);
\draw[dashed] (0,0)--(0,1.5);
\draw[line width=0.7mm,red] (0,0)--(0.5,0.5)--(1,0)--(2.5,1.5)--(4,0);
\end{tikzpicture}\qquad
\begin{tikzpicture}
\draw[dashed] (0,0)--(4.5,0);
\draw[dashed] (0,0)--(0,1.5);
\draw[line width=0.7mm,red] (0,0)--(1,1)--(1.5,0.5)--(2,1)--(3,0);
\end{tikzpicture}
\quad 
\begin{tikzpicture}
\draw[dashed] (0,0)--(4.5,0);
\draw[dashed] (0,0)--(0,1.5);
\draw[line width=0.7mm,red] (0,0)--(2,2)--(4,0);
\end{tikzpicture}
}
\caption{The nine Dyck paths with sum of peak heights equal to $4$. Their contributions to $A(p,q,v)$ are, from left to right, $p^4$, $p^3$, $p^3$, $p^3$, $p^2$, $p^2$, $p^2$, $p^2v$, and $p$.}\label{dycks}
\end{figure}
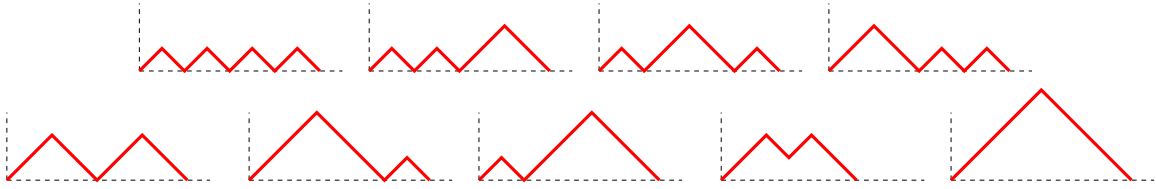

Setting  $p=q$ and $v=1$ in $A(p,q,v)$, we  obtain $A(q,q,1)=F(1,1,q,1,1)$. Hence the following corollary gives  an alternative expression for $F(1,1,q,1,1)$ as a continued fraction.

\begin{corollary} The generating function $F(1,1,q,1,1)$ for the number of Stanley polyominoes  with respect to the area is
   \[ F(1,1,q,1,1)=-1+\cfrac{1}{
  2-q^2-\cfrac{1}{
    2-q^3-\cfrac{1}{
      2-q^4-\cfrac{1}{
        2-q^5-\cdots
      }
    }
  }
}.
\]
\end{corollary}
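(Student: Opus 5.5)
Our starting point is the preceding theorem with $p=q$ and $v=1$. We first confirm that $A(q,q,1)=F(1,1,q,1,1)$. Under $\phi$, a Stanley polyomino with $n+1\geq 2$ columns corresponds to a nonempty Dyck path of semilength $n$. Its area equals the sum of peak heights plus the number of peaks, so its weight $q^{\area}$ becomes $q^{\nbp+\sump}$, which is exactly the weight obtained from $p^{\nbp}q^{\sump}$ when $p=q$.

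One subtlety must be handled. The single-cell polyomino (one column) corresponds to the empty path, and $A$ counts only nonempty paths. The coefficient of $z^1$ in $F(1,1,z,1,1)$ is $1$, while $A(q,q,1)$ starts at $q\cdot p=q^2$. We therefore check the low-order terms directly rather than trust the identification blindly. Setting $p=q$, $v=1$ in the displayed expansion of $A$ gives
\[
q^2+q^3+q^4+\cdots
\]
against $z+z^2+z^3+2z^4+\cdots$. So the exact relation has a shift, and we would rather derive the corollary directly from the continued fraction together with the functional equation. At the step where this matters, we plan to state the identity carefully as $F(1,1,q,1,1)=A(q,q,1)$ up to the single-cell term and the convention about empty paths. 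We would pin this down by comparing a few coefficients of both sides using the series already printed in the paper.

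For the continued fraction, we specialize the functional equation from the proof of the previous theorem:
\[
A(p,q,v)=-1+\frac{v}{v-pqv-A(pqv,q,v)}.
\]
With $v=1$ this reads $A(p,q,1)=-1+1/(1-pq-A(pq,q,1))$. Writing $B(p)=1+A(p,q,1)$, we get
\[
B(p)=\frac{1}{2-pq-B(pq)}.
\]
Iterating from $p=q$ produces the successive arguments $q,q^2,q^3,\dots$. The $k$th level therefore has denominator $2-q\cdot q^{k}=2-q^{k+1}$, for $k\geq 1$. This gives
\[
B(q)=\cfrac{1}{2-q^2-\cfrac{1}{2-q^3-\cdots}},
\]
and subtracting $1$ yields the displayed formula.

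Finally, we justify convergence in the formal topology. The $k$th tail affects only coefficients of order at least about $\binom{k+1}{2}$ in $q$, because $B(q^k)-1=O(q^{2k})$. Hence the truncations converge $q$-adically, and the infinite continued fraction is well defined and equals $B(q)-1$.

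The main obstacle is the bookkeeping identification between $A(q,q,1)$ and $F(1,1,q,1,1)$: the shift by the one-column polyomino and the indexing of areas. We expect to resolve it by the coefficient check above, together with the exact relation $\area=\nbp+\sump$ from Table~1.
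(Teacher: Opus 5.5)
Your route is the paper's: specialize the functional equation (equivalently, the continued fraction of the preceding theorem) at $p=q$, $v=1$. Your iteration $B(p)=1/(2-pq-B(pq))$, started at $p=q$, correctly shows that $1+A(q,q,1)$ equals the displayed continued fraction.

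The gap is the last step, ``subtracting $1$ yields the displayed formula''. What you have proved is an identity for $A(q,q,1)$, and you observed yourself that $A(q,q,1)\neq F(1,1,q,1,1)$, but you never resolved this. Your diagnosis of a ``shift'' rests on an arithmetic slip. With $p=q$, $v=1$ the printed expansion of $A$ gives $q^2+q^3+2q^4+3q^5+6q^6+\cdots$, not $q^2+q^3+q^4+\cdots$, and this agrees term by term with $z^2+z^3+2z^4+3z^5+6z^6+\cdots$. There is no index shift; the two series differ exactly by the term $q$ of the one-cell polyomino.

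Indeed, for a Stanley polyomino with at least two columns all $a_i,b_i$ are positive, so $\phi(P)$ has exactly $\row(P)$ peaks and $\area(P)=\nbp+\sump$. The single cell (one row, area $1$) goes to the empty path, which has no peaks and is not counted by $A$. Hence $F(1,1,q,1,1)=q+A(q,q,1)$. Since the right-hand side of the statement has zero coefficient of $q$, no bookkeeping can rescue the displayed identity: as printed it is off by $q$. The correct conclusion of your argument is
\[
F(1,1,q,1,1)=q-1+\cfrac{1}{2-q^2-\cfrac{1}{2-q^3-\cfrac{1}{2-q^4-\cdots}}},
\]
that is, the stated right-hand side is the area generating function of Stanley polyominoes with at least two columns. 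The paper's one-line justification (``we obtain $A(q,q,1)=F(1,1,q,1,1)$'') contains the same oversight, so flagging it was the right instinct. The missing step is to state and prove this corrected identity rather than assert the printed one.

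A secondary error is in your convergence estimate. In fact $B(q^k)-1=A(q^k,q,1)=q^{k+1}+\cdots$, since the path $ud$ alone contributes $q^{k+1}$; it is not $O(q^{2k})$. So the depth-$k$ tail affects coefficients from order $k+1$ on, not from order $\binom{k+1}{2}$. Each map $x\mapsto 1/(2-q^{j}-x)$ has derivative $1/(2-q^j-x)^2\equiv 1 \pmod q$ at $x\equiv 1$. Hence an error of order $q^{k+1}$ in the tail reaches the top with the same order. This still gives $q$-adic convergence, but only for truncations that replace the tail by $1$. The ordinary convergents (tail replaced by $0$) have constant terms $n/(n+1)$ and do not converge $q$-adically.
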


\begin{corollary}
Let $A(p,p,0)$ be the generating function for Dyck paths all of whose valleys are at height $0$, where $p$ marks the sum of the peak heights plus the number of peaks. Equivalently, $A(p,p,0)$ is the generating function for Stanley polyominoes with no interior points, where $p$ marks the area. Then
\[
A(p,p,0)=\frac{p^2}{1-p-p^2}.
\]
Moreover, $[p^n]A(p,p,0)=F_{n-1}$ for  $n\geq 2$ where $F_n$ is the $n$th Fibonacci number.
\end{corollary}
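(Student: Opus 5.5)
Set $v=0$ and $q=p$ in the functional equation from the proof of the preceding theorem. Recall that it reads
\[
A(p,q,v)=pq+pqA(p,q,v)+\tfrac1v A(pqv,q,v)\bigl(A(p,q,v)+1\bigr).
\]
Rather than specialising this equation directly, where the factor $1/v$ makes the limit $v\to 0$ delicate, I will redo the first-return decomposition $D=uQdR$ restricted to Dyck paths all of whose valleys lie at height $0$.

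If $Q$ is nonempty, then $Q$ must have no valleys at all, since each valley of $Q$ is lifted to height at least $1$. So $Q=u^jd^j$ and $D=u^{j+1}d^{j+1}R$, where $R$ is again a path with all valleys at height $0$, possibly empty. If $Q$ is empty, the same shape occurs with $j=0$. Hence every such path is a concatenation of pyramids $u^kd^k$ with $k\ge 1$, and the valleys between pyramids lie on the axis.

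Under the weight $p^{\nbp+\sump}$, a pyramid of height $k$ contributes $p^{k+1}$. The pyramid generating function is therefore
\[
\sum_{k\ge1}p^{k+1}=\frac{p^2}{1-p}.
\]
Concatenating one or more pyramids gives
\[
A(p,p,0)=\frac{\frac{p^2}{1-p}}{1-\frac{p^2}{1-p}}=\frac{p^2}{1-p-p^2}.
\]
Equivalently, in the functional equation, $\frac1v A(pqv,q,v)$ at $v=0$ keeps only the one-peak terms $\sum_{j\ge1}p q^{2j}$ (shifted), which gives the same result; I will mention this only as a check.

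For the polyomino interpretation, Table~1 together with the bijection $\phi$ gives area $=\sump+\nbp$ and $\point=\sumv$. Since valley heights are nonnegative, $\point=0$ is equivalent to all valleys being at height $0$.

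For the coefficients, the standard identity
\[
\frac{p}{1-p-p^2}=\sum_{n\ge 0}F_np^n
\]
gives $[p^n]A=F_{n-1}$. There is no real obstacle here; the only care needed is the claim that $Q$ has no valleys, and the conventions at small $n$: the coefficient at $n=1$ is $0$, and the polyomino with $n=1$ would correspond to the empty path, which is excluded from $\mathcal D$.
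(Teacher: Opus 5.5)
Your proof is correct and follows the paper's argument: decompose into a nonempty sequence of pyramids $u^kd^k$, each of weight $p^{k+1}$, and sum the sequence. Your first-return argument only makes explicit the pyramid structure that the paper asserts directly. One slip in your side check: at $v=0$, $\frac1v A(pqv,q,v)$ equals $\sum_{j\ge1}pq^{j+1}$, since a pyramid of height $j$ has weight $(pqv)q^j$, and not $\sum_{j\ge1}pq^{2j}$; with the correct term the functional equation does reduce to $A=\frac{p^2}{1-p}\bigl(1+A\bigr)$.
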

\begin{proof}
A Dyck path all of whose valleys are at height $0$ is a concatenation of pyramids of the form $u^k d^k$, with $k\ge 1$.  A pyramid $u^k d^k$ has exactly one peak, at height $k$, so its contribution is $p^{k+1}$. Therefore the generating function for a single pyramid is
\[
\sum_{k\geq 1} p^{k+1}=\frac{p^2}{1-p}.
\]
Hence the generating function for a nonempty sequence of such pyramids is
\[
A(p,p,0)=\frac{p^2}{1-p}\bigl(1+A(p,p,0)\bigr).
\]
Solving for $A(p,p,0)$ gives
\[
A(p,p,0)=\frac{p^2}{1-p-p^2}.
\]
The coefficient formula follows immediately from the classical generating function of the Fibonacci numbers.
\end{proof}
For example, $[p^6]A(p,p,0)=F_5=5$, which corresponds to the Dyck paths shown in Figure~\ref{dycks2}.
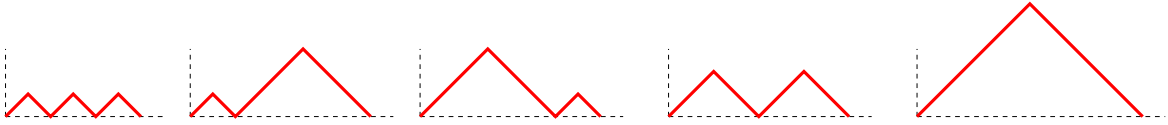
\begin{figure}[ht!]
 \scalebox{0.6}{\begin{tikzpicture}
\draw[dashed] (0,0)--(3.5,0);
\draw[dashed] (0,0)--(0,1.5);
\draw[line width=0.7mm,red] (0,0)--(0.5,0.5)--(1,0)--(1.5,0.5)--(2,0)--(2.5,0.5)--(3,0);
\end{tikzpicture} \quad
\begin{tikzpicture}
\draw[dashed] (0,0)--(4.5,0);
\draw[dashed] (0,0)--(0,1.5);
\draw[line width=0.7mm,red] (0,0)--(0.5,0.5)--(1,0)--(2.5,1.5)--(4,0);
\end{tikzpicture}
\quad
\begin{tikzpicture}
\draw[dashed] (0,0)--(4.5,0);
\draw[dashed] (0,0)--(0,1.5);
\draw[line width=0.7mm,red] (0,0)--(1.5,1.5)--(3,0)--(3.5,0.5)--(4,0);
\end{tikzpicture} \qquad 
\begin{tikzpicture}
\draw[dashed] (0,0)--(4.5,0);
\draw[dashed] (0,0)--(0,1.5);
\draw[line width=0.7mm,red] (0,0)--(1,1)--(2,0)--(3,1)--(4,0);
\end{tikzpicture} \qquad
\begin{tikzpicture}
\draw[dashed] (0,0)--(5.5,0);
\draw[dashed] (0,0)--(0,1.5);
\draw[line width=0.7mm,red] (0,0)--(2.5,2.5)--(5,0);
\end{tikzpicture}
}
\caption{The five Dyck paths with sum of peak heights plus number of peaks equal to $6$.}\label{dycks2}
\end{figure}

\begin{corollary} Let $A(1,q,1)$ be the generating function  for Dyck paths, where $q$ marks the sum of the peak heights. Equivalently, $A(1,q,1)$ is the generating function for Stanley polyominoes  with respect to the area minus the number of rows.
Then  \[A(1,q,1)= -1+\cfrac{1}{
  2-q-\cfrac{1}{
    2-q^2-\cfrac{1}{
      2-q^3-\cfrac{1}{
        2-q^4-\cdots
      }
    }
  }
}.\]
\end{corollary}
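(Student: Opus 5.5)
This is a specialization of the preceding theorem on $A(p,q,v)$. The plan is to set $p=1$ and $v=1$ in that continued fraction and check that the result simplifies to the displayed one.

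First, I confirm that the specialization is legitimate as formal power series in $q$. Every nonempty Dyck path has at least one peak, and the sum of its peak heights is at least its semilength. So for each power of $q$ only finitely many Dyck paths contribute, and substituting $p=v=1$ commutes with extracting coefficients of $q^n$. The same estimate shows the continued fraction converges $q$-adically: in the $n$th level the partial numerator carries $q^{n}$, so truncating at depth $n$ changes only coefficients of order at least $q^{n}$.

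Second, I specialize the functional equation from the proof of the preceding theorem. With $v=1$ and $p$ kept general it reads
\[
A(p,q,1)=-1+\frac{1}{1-pq-A(pq,q,1)}.
\]
Iterating this (replacing $p$ by $pq$ at each step), the $k$th level of the continued fraction contains $1+v-pq^kv^k$. At $v=1$ this becomes $2-pq^k$; note that the term $-A(pq^k,q,1)$ is replaced by $1-\frac{1}{(\cdots)}$, so $1-pq^k-(-1+\cdots)=2-pq^k-\cdots$. Setting $p=1$ then gives denominators $2-q^k$ for $k=1,2,3,\dots$, with all partial numerators equal to $1$. This is exactly the displayed formula.

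Third, the interpretation in terms of Stanley polyominoes follows from Table~1. Through $\phi$, the sum of peak heights equals $\area(P)-\row(P)$, so $A(1,q,1)$ enumerates Stanley polyominoes (with at least two columns, corresponding to nonempty Dyck paths) by area minus number of rows.

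The only delicate point is the justification of the infinite iteration, namely the convergence above. As a sanity check, I will expand the result to low order and compare with the $p=v=1$ specialization of the listed series: $q+2q^2+4q^3+9q^4+\cdots$.
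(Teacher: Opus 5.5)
Your main line is the paper's: the corollary is simply the specialization $p=v=1$ of the continued fraction for $A(p,q,v)$. Three parts of your proposal are fine: the legitimacy of the specialization (sum of peak heights is at least the semilength), keeping $p$ general while iterating, and the polyomino interpretation.

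The gap is in the convergence paragraph, which is exactly the step you call delicate. Its premise is false. In the displayed fraction every partial numerator is $1$, and $q^n$ appears only in the partial denominator $2-q^n$, which does not make the tail small. In fact the tails are close to $1$, not $0$: at $q=0$ each level is $x\mapsto 1/(2-x)$, whose fixed point $x=1$ is parabolic. So the ordinary approximants, with the depth-$n$ tail replaced by $0$, have constant term $-1+\frac{n}{n+1}=-\frac{1}{n+1}$. They do not converge $q$-adically. Your sanity check would also fail if done by naive truncation: depth $1$ gives $-\tfrac12+\tfrac q4+\cdots$.

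The repair is to read the infinite fraction through the exact finite identities that iteration produces. Put $B_k=1+A(q^k,q,1)$. The functional equation with $v=1$ and $p=q^k$ gives $B_k=1/(2-q^{k+1}-B_{k+1})$. Hence, for every $N$,
\[
1+A(1,q,1)=\cfrac{1}{2-q-\cfrac{1}{2-q^{2}-\cfrac{1}{\ddots-\cfrac{1}{2-q^{N}-B_N}}}}.
\]
Here $B_N-1=A(q^N,q,1)$ is divisible by $q^{N+1}$, because each nonempty Dyck path $D$ contributes $q^{N\nbp(D)+\sump(D)}$.

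Each map $x\mapsto 1/(2-q^k-x)$ sends $1+q\mathbb{Z}[[q]]$ into itself and preserves $q$-adic distances there, since all denominators involved are units. So replacing $B_N$ by $1$ changes the right-hand side only by $O(q^{N+1})$. Hence the approximants with tail $1$ converge $q$-adically to $1+A(1,q,1)$, and this is the sense in which the displayed identity holds. For instance, depth $1$ with tail $1$ gives $-1+1/(1-q)=q+O(q^2)$, which is correct to that order.
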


The first few  terms of the series expansion of $A(1,q,1)$ are 
\[q+2q^2+4q^3+9q^4+20q^5+46q^6+105q^7+242q^8+557q^9+O(q^{10}).\]
The sequence of coefficients is a shift of the sequence \seqnum{A006958}, whose  $n$th term counts the number of parallelogram polyominoes with $n$ cells.  Indeed, by Corollary~\ref{coropara}, to every parallelogram polyomino $P$ of area $n$ there corresponds a Stanley polyomino $Q=\phi^{-1}(h(P))$ such that
$n=\area(Q)-\row(Q)$.

\begin{corollary} Let $A(1,q,q)$ be the generating function for Dyck paths, where $q$ marks the sum of the peak heights plus the sum of the valley heights. Equivalently, $A(1,q,q)$ is the generating function for Stanley polyominoes with respect to the area plus the  number of interior points minus the number of rows. Then 
\[
A(1,q,q)= -1+\cfrac{q}{
  1+q-q^2-\cfrac{q}{
    1+q-q^4-\cfrac{q}{
      1+q-q^6-\cfrac{q}{
        1+q-q^8-\cdots
      }
    }
  }
}.
\]
\end{corollary}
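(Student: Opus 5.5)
The plan is to specialize the continued fraction of the preceding theorem for $A(p,q,v)$ at $p=1$ and $v=q$. No new combinatorics is needed: the interpretation of $A(1,q,q)$ follows from Table~1 and the bijection $\phi$. Under $\phi$, the number of rows is the number of peaks, and the area is the sum of peak heights plus the number of peaks. Hence the sum of peak heights equals $\area(P)-\row(P)$. The sum of valley heights equals $\point(P)$. So the exponent of $q$ in $A(1,q,q)$ is $\area(P)+\point(P)-\row(P)$, which is the stated equivalence.

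For the formula, I would avoid substituting into the displayed continued fraction directly, since its generic level is written only through the first few denominators. Instead I would return to the functional equation from the proof of the preceding theorem:
\[
A(p,q,v)=-1+\frac{v}{v-pqv-A(pqv,q,v)}.
\]
Writing $B(p)=1+A(p,q,v)$, this becomes $B(p)=v/\bigl(1+v-pqv-B(pqv)\bigr)$. Iterating gives levels with arguments $p(qv)^{j}$ for $j=0,1,2,\dots$. The $j$th denominator is therefore $1+v-p(qv)^{j}qv$, that is, $1+v-pq^{j+1}v^{j+1}$. This matches the displayed terms $pqv$, $pq^2v^2$, $pq^3v^3$, and each numerator is $v$.

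Now set $p=1$ and $v=q$. The $j$th denominator becomes $1+q-q^{j+1}q^{j+1}=1+q-q^{2j+2}$, giving the successive values $q^2,q^4,q^6,q^8,\dots$. Every numerator becomes $q$. This is exactly the claimed continued fraction.

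The only point needing care is that the iteration converges as a formal power series in $q$. At $p=1$, $v=q$, the argument at level $j$ is $q^{2j}$, and it enters the expansion only through terms of order at least $q^{2j+2}$. So truncating at depth $j$ changes only coefficients of degree larger than about $2j$, and the infinite continued fraction is a well-defined formal limit equal to $A(1,q,q)$. I expect this convergence check, together with correctly indexing the exponent $2j+2$ from the functional equation, to be the main (mild) obstacle. As a sanity check, I would expand the first few terms and compare them against small Dyck paths.
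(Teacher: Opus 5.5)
Your algebra matches the paper, which obtains this corollary just by setting $p=1$, $v=q$ in the preceding theorem. That theorem's proof iterates the same functional equation, giving partial denominators $1+q-q^{2j+2}$, and your reading of the exponent via Table~1 is right. The gap is the convergence paragraph, and it is not mild. The argument is wrong, and its conclusion fails for the displayed continued fraction read in the usual sense, as the limit of the convergents obtained by dropping the tail.

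Put $B_j=1+A(q^{2j},q,q)$, so that $B_j=q/\bigl(1+q-q^{2j+2}-B_{j+1}\bigr)$. The tail $B_{j+1}$ has constant term $1$, not $0$. So each true denominator
\[
1+q-q^{2j+2}-B_{j+1}=q-q^{2j+2}-A(q^{2j+2},q,q)
\]
has valuation exactly $1$, and dropping the tail is not a small perturbation. Your estimate (the tail only affects degrees beyond about $2j$) is the Flajolet-type argument, which presupposes tails of positive valuation. Applied honestly, it shows that the usual convergents converge formally, but to the wrong series. If $t\in q\mathbb{Z}[[q]]$, then $1+q-q^{2j+2}-t$ is a unit and $q/(1+q-q^{2j+2}-t)\in q\mathbb{Z}[[q]]$. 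By induction from the dropped tail, every level of every convergent therefore lies in $q\mathbb{Z}[[q]]$. The convergents follow the fixed point $q$ of the limiting map $t\mapsto q/(1+q-t)$ rather than its other fixed point $1$, and all of them equal $-1+O(q)$. For instance, $-1+q/(1+q-q^2)=-1+q-q^2+\cdots$, whereas $A(1,q,q)=q+2q^2+\cdots$. Numerically, at $q=0.3$ the convergents tend to about $-0.67$, while $A(1,0.3,0.3)\approx 0.77$.

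What can be proved is the identity with the infinite continued fraction understood as iterating the functional equation with tails $B_N\to 1$. Equivalently, it is the limit of the finite fractions whose last partial denominator is $q-q^{2N}$, i.e.\ with the tail $B_N$ replaced by its constant term $1$ rather than by $0$. The error analysis then differs from yours. Write $B_j=1/\bigl(1-q^{2j+1}-(B_{j+1}-1)/q\bigr)$, where the denominator is a unit. Because of the division by $q$, an error of valuation $m$ in $B_{j+1}$ yields an error of valuation at least $m-1$ in $B_j$. Since $B_N\equiv 1\pmod{q^{2N+1}}$, the depth-$N$ approximation is correct modulo $q^{N+1}$, not $q^{2N}$, and this gives formal convergence to $1+A(1,q,q)$. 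The paper only says that iterating yields the continued fraction and never addresses convergence, so its displayed formula needs exactly this interpretation. Your proof should state it explicitly rather than appeal to truncation.
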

The first terms of the series expansion of $A(1,q,q)$ are 
\[q+2q^2+4q^3+8q^4+17q^5+36q^6+76q^7+162q^8+345q^9+O(q^{10}).\]
The sequence of coefficients is a shift of \seqnum{A226729}. To the best of our knowledge, the above combinatorial interpretation is new. There are, however, some conjectures suggesting that this sequence also counts certain families of restricted compositions.


\begin{thebibliography}{10}

\bibitem{bal} P.~Bala. Fountains of coins and skew Ferrers diagrams.  (2019), \url{https://oeis.org/A161492}.

\bibitem{kernel} C.~Banderier, M.~Bousquet-M\'elou, A.~Denise, P.~Flajolet, D.~Gardy, and D.~Gouyou-Beauchamps. Generating functions for generating trees. \newblock \emph{Discrete Math.} \textbf{246} (2002), 29--55.

\bibitem{Motzkin} J.-L.~Baril, S.~Kirgizov, J.L.~Ram\'irez, and D.~Villamizar. The combinatorics of Motzkin polyominoes.
\emph{Discrete Appl. Math.} \textbf{364} (2025), 1--15.

\bibitem{BleBreKnop3} A.~Blecher, C.~Brennan, A.~Knopfmacher, and T.~Mansour.  The perimeter of words. \emph{Discrete Math.} \textbf{340} (10) (2017), 2456--2465. 

\bibitem{Blecher} A.~Blecher, C.~Brennan, and A.~Knopfmacher. The site-perimeter of compositions. \emph{Discrete Math.
Appl.} \textbf{32} (2) (2022), 75-–89.

\bibitem{Bousquet92} M.~Bousquet-Mélou. \newblock Une bijection entre les polyominos convexes dirigés et les mots de Dyck bilatères. \newblock RAIRO Theor. Inform. Appl.  \textbf{26} (3) (1992), 205--219.

\bibitem{Bousquet} M.~Bousquet-Mélou. \newblock $q$-Énumération de polyominos convexes. \newblock \emph{J. Combin. Theory Ser. A}  \textbf{64} (2) (1993), 265--288.

\bibitem{Bou} M.~Bousquet-Mélou and A.~Rechnitzer. The site-perimeter of bargraphs. 
\emph{Adv. Appl. Math.} \textbf{31} (2003), 86--112.

\bibitem{Callan} D.~Callan, T.~Mansour, and J.~L.~Ram\'irez. Statistics on bargraphs of Catalan words. \emph{J. Autom. Lang. Comb.} \textbf{26} (2021), 177--196.


\bibitem{Delest} M.P.~Delest and X.~Viennot. Algebraic languages and polyominoes enumeration. \emph{Theoret. Comput. Sci. } \textbf{34} (1984), 169--206.

\bibitem{Delest87} M.P.~Delest, D.~Gouyou-Beauchamps, and B. ~Vauquelin. Enumeration of parallelogram polyominoes with given bond and site perimeter. \emph{Graphs Combin.} \textbf{3} (1987), 325--339.

\bibitem{Delest92} M.P.~Delest and J.M.~Fedou. Attribute grammars are useful for combinatorics.
\emph{Theoret. Comput. Sci.}
\textbf{98} (1) (1992), 65--76.

\bibitem{Delest93} M.P.~Delest and J.M.~Fedou. Enumeration of skew ferrer diagrams. 
\emph{S\'em. Lothar. Combin.} 
\textbf{23} B23d (1993), 51--66.

\bibitem{Deutsch} E.~Deutsch.
\newblock Dyck path enumeration.
\newblock {\em Discrete Math.} \textbf{204} (1999), 167--202.

\bibitem{Duchi} E.~Duchi.
\newblock Polyominoes, permutominoes and permutations.
\newblock {\it Habilitation \`a diriger des recherches}, https://www.irif.fr/$\sim$duchi/hdrDuchi.pdf, (2018). 

\bibitem{Flajolet} P.~Flajolet. Combinatorial aspects of continued fractions. \emph{Discrete Math.} \textbf{32} (1980), 125--161.


\bibitem{feretic} S.~Fereti\'c. \newblock A perimeter enumeration of column-convex polyominoes.
\newblock \emph{Discrete Math. Theor. Comput. Sci.} \textbf{9} (1) (2007), 57--83.

%\bibitem{flaj} P.~Flajolet and R.~Sedgewick.  \newblock \emph{Analytic Combinatorics}. Cambridge University Press,  2009.

\bibitem{ManSha2} T.~Mansour and A.~Sh.~Shabani. Enumerations on bargraphs. \emph{Discrete Math. Lett.} \textbf{2} (2019), 65--94.



 \bibitem{Book1} A.~J.~Guttmann (Ed.). \emph{Polygons, Polyominoes and Polycubes}. Lecture Notes in Physics 775. Springer, Heidelberg, Germany, 2009.


\bibitem{Sloa} OEIS Foundation Inc., The On-Line Encyclopedia of Integer Sequences, \url{http://oeis.org/}.

\bibitem{pro} H.~Prodinger.
\newblock The kernel method: a collection of examples. 
\newblock {\em Sém. Lothar. Combin}, \textbf{50} (2004),  Paper B50f. 




\bibitem{stan} R.P.~Stanley. \emph{Catalan Numbers}.  Cambridge University Press, 2015.

\bibitem{temp} H.N.V.~Temperley. \newblock Combinatorial problems suggested by the statistical mechanics of domains and of rubber-like molecules. \newblock \emph{Phys. Rev.} \textbf{103} (1956), 1--16.

\bibitem{Wilf}
H.S.~Wilf.  
\emph{Generatingfunctionology}.
3rd ed., 
A K Peters/CRC Press, 2005.

\end{thebibliography}
\end{document}